\documentclass[11pt]{article}

\usepackage{amsmath,amssymb,amsthm,mathtools,mathrsfs,enumitem,comment,booktabs,hyperref}

\title{Failure of Strong Convergence of Matrices with Fermionic Entries}
\author{Dimitri Shlyakhtenko}
\date{\today}

\newtheorem{theorem}{Theorem}
\newtheorem{lemma}{Lemma}
\newtheorem{corollary}{Corollary}

\newtheorem{proposition}{Proposition}

\newcommand{\Cl}{\operatorname{Cl}}
\newcommand{\atimes}{\dot{\otimes}}
\newcommand{\myd}{d}
\begin{document}
\maketitle
\begin{abstract}
    Let $Q^{(k)}_N$ be an $N\times N$ matrices with entries satisfying CAR, normalized to have variance $1/\sqrt{N}$ with respect to the trace of the CAR algebra. We show that, although the operator norm of the real part of an individual matrix $Q^{(k)}_N$ converges as $N\to\infty$ to the semicircular limit, the family of matrices does not converge to the free probability limit strongly. In fact, even the operator space structure of the linear spans of the real and imaginary parts of $Q^{(k)}_N$'s, $k=1,\dots,M$, does not converge to the semicircular limit. 
\end{abstract}
\section{Introduction.}
In this paper, we consider the asymptotics of $N\times N$ block matrices $A^{(k)}_N$, $1\leq k\leq M$ with entries given by generators satisfing Canonical Anti-Commutation Relations (CAR) (see \S\ref{sect:CARMatrices} for the precise definitions and notations).  These matrices are fermionic analogs of GUE matrices, in the sense that the Gaussian random entries of GUE matrices are replaced by ``iid'' fermionic variables.  It has been known, since the work of Voiculescu, that, under a natural trace, such ``CAR-UE'' matrices converge in moments to a free semicircular family $S_1,\dots,S_M$ (see \cite{Voiculescu:Random}).  However, two questions were left open after Voiculescu's work. The first is the behavior of the operator norm of a single matrix $A_N=A_N^{(1)}$.  The second is the question of strong convergence: whether it is true that operator norms of arbitrary polynomials in the matrices $A_N^{(1)},A_N^{(2)},\dots$ converge to the operator norm of the same polynomial but evaluated in the free semicircular family $S_1,S_2,\dots$.  This question was communicated to the author by Gilles Pisier, and it came up, for example, in the work \cite{Pisier-Shlyakhtenko:Grothendieck} where the matrices $A_N^{(k)}$ were used as one of the ways to prove the QWEP property for $C^*$-algebras generated by generalized circular elements. 

Our main result is that for all $N$,  $\Vert A_N \Vert = \sqrt{4-3/N} < 2$ (in fact, we give a precise description of the spectrum of $A_N$, see Theorem~\ref{thrm:spectrumOfA}); moreover, $\Vert A_N\Vert\to 2$, which is the operator norm of the semicircular limit. However, we show that strong convergence fails if we consider simultaneously $12$ or more such matrices. More precisely, if $A_N^{(1)}, B_N^{(2)},\dots,A_N^{(M)},B_N^{(M)}$ are ``CAR-iid'' copies of $A_N^{(1)}$, and we set $Q_N^{(j)} = A_N^{(j)} + i B_N^{(j)}$, then $Q_N^{(j)}$ converge in $*$-moments to a family of freely independent circular elements $Q^{(1)},\dots,Q^{(M)}$, but $$\limsup_{N\to\infty} \Big\Vert \sum_{j=1}^M Q_N^{(j)} (Q_N^{(j)})^*\Big\Vert \gneq \Big\Vert \sum_{j=1}^M Q^{(j)} (Q^{(j)})^*\Big\Vert. $$ (see Theorem~\ref{thrm:noStrong} below). The same example shows that the operator space structure of the span of $A_N^{(1)}, B_N^{(2)},\dots,A_N^{(M)},B_N^{(M)}$ does not converge, as $N\to\infty$ to the free probability limit. Thus the asymptotics of the ``CAR-UE'' matrices gives another example of a real Hilbertian operator space \cite{Pisier:Book}, which is not isomorphic to the complex span of semicircular variables, and whose precise operator structure we cannot elucidate.  Convergence to the semicircular limit does hold for real-linear combinations, but we are unable to prove the complex case (although we suspect that the statement is true). We summarize these results in Table~\ref{tbl:asymptoticComparison}.

\begin{table}[h]
\begin{center}
\begin{tabular}{lll}
Space & Comparison to Free  &  Isomorphism Class \\
\toprule
Banach space $W_\infty^\mathbb{R}$ &  same & $ \ell_\mathbb{R}^2(2M)$  \\
Banach space $W_\infty^\mathbb{C}$ &
unknown, same? &  conjecturally $\ell^2_{\mathbb{C}}(2M)$ \\
Operator space  $W_\infty^\mathbb{C}$  & 
different & unknown \\ 
$C^*$-algebra $C^*(W_\infty^{\mathbb{C}})$ & different & unknown \\
\bottomrule 
\end{tabular}
\caption{Here $W^{\mathbb{C}}_\infty$ and $W^{\mathbb{R}}_\infty$ are the complex or real span of the family $(A^{(j)}_N,B^{(j)}_N, 1\leq j\leq M)$ in the ultraproduct of  $W^*(A^{(j)}_N,B^{(j)}_N:1\leq j\leq M).$ } \label{tbl:asymptoticComparison}
\end{center}
\end{table}

There is another obstruction to strong convergence, where a different degree-two polynomial involving commutators of the ``CAR-UE'' matrices is used.  This computation is given in Theorem~\ref{thrm:noStrongWeaker}. Based on this norm computation, we give evidence in  \S\ref{sec:OtherOmeags} that there is not a ``a single-vector obstruction to norm convergence'' as is the case for permutation matrices, and showing that it does not seem possible to ``restore'' strong convergence by restricting to a ``large enough'' part of the underlying Hilbert space.

To conclude the paper, we discuss a variant of the ``CAR-UE'' matrices involving $q$-creation operators \cite{Bozejko-Speicher:qBrownian} for $-1<q<1$. We show that in those cases, strong convergence holds true, in fact not just for the ``$q$-UE'' matrices, but for matrices with creation operators as entries (and free creation operators as limits).

\subsection{AI use: How the results were obtained}
This paper also represents an experiment in the use of AI for research in operator algebras. The counterexample to strong convergence was found by utilizing software written by ChatGPT 5.4 Pro, and ChatGPT was able to find an expression for the special vector $\eta_0$ used to give a lower bound on the norm of sums of commutators in Lemma~\ref{lemma:normOfCommutator}. However, it did not realize that this gives a counterexample to strong convergence.  ChatGPT, however, showed remarkable performance on the question of convergence of $A_N$ (though when asked directly it hallucinated incorrect operator-space inequalities). It hallucinated the combinatorial identity \eqref{eq:MomentsAreSuperBallots} discussed in  \S\ref{subsec:SpectralMeasureAN} for the moments of the matrices $A_N$. When explained that this identity does exist in the paper \cite{gessel} and asked to give a direct proof, it retraced the steps back to the matrix $A_N$, found the representation theory proof presented in this paper, and derived the identity as a consequence. ChatGPT 5.5 Pro was also helpful in the proof of the $q$-deformed version. Overall, it is clear that it is a useful and powerful tool for mathematical research, even in a highly theoretical field such as the field of this paper. 

A partial formalization of the results of this paper in Lean is available at \url{https://github.com/shlyakhtenko/car-matrices}. The formalization covers \S2, including the underlying representation-theory tools, as well as Lemma~\ref{lemma:normOfCommutator}, down to theorems and definitions in Mathlib v4.32.0-rc1 with no additional axioms or hypotheses. We intend to pursue formalization of the free probability norm estimates necessary to see the violation of norm convergence later on.

\subsection{Setup and Notation}\label{sect:CARMatrices}
For fixed integers $M$ and $N$, consider the real Hilbert space  \[H=\bigoplus_{k=1}^M \bigl(H_{k}^{(1)} \oplus H_{k}^{(2)}\bigr),\] where for each 
$k=1,2,\dots,M$ and $r=1,2$,  $H_k^{(r)}$ is  a real Hilbert space of dimension $N^2$ with basis $h_{pq}^{(r,k)}$, $1\leq p,q\leq N$.  Let $\mathscr{F}$ denote the associated antisymmetric Fock space (see e.g. \cite{bratteli-robinson}), and let $\Omega$ be the vacuum vector. Denote by $\mathbb{B}$ the space of (bounded) operators on the finite-dimensional space $\mathscr{F}$. 

For $h \in H\otimes_{\mathbb{R}} \mathbb{C}$, let $a(h)^*\in \mathbb{B}$ be the creation operator given on elementary antisymmetric tensors by $ a(h)^* (h_1\wedge \dots \wedge h_k) = h\wedge h_1\wedge \dots \wedge h_k$,
and let $a(h)$ be its adjoint.  These operators satisfy the Canonical Anti-Commutation Relations: for $h,g\in H\otimes_{\mathbb{R}} \mathbb{C}$,
\[
a(h) a(g) + a(g) a(h) = 0,\ a(h)a(g)^* + a(g)^* a(h) = \langle h,g\rangle.
\]

For $h\in H$, let
\[
b(h) = a(h) + a(h)^*
\] be the associated Majorana operator.

Let $e_{pq} \in M_{N\times N}$, $1\leq p,q\leq N$, be elementary $N\times N$ matrices, and consider the operators
\[
Q^{(k)}_N = \frac{1}{\sqrt{N}} \sum_{p,q=1}^N e_{pq} \otimes \bigl(b(h^{(1,k)}_{pq}) + i\, b(h^{(2,k)}_{pq})\bigr) \in M_{N\times N}\otimes \mathbb{B}.
\] 

Let
\[
A^{(k)}_N = \frac{Q^{(k)}_N + (Q^{(k)}_N)^*}{2}, \qquad
B^{(k)}_N = \frac{Q^{(k)}_N - (Q^{(k)}_N)^*}{2i}
\] be the real and imaginary parts of $Q_N^{(k)}$.  Denote by $\phi:\mathbb{B}\to\mathbb{C}$ the vector state $x\mapsto \langle \Omega , x\Omega\rangle$ and by $\phi_N: M_{N\times N}\otimes \mathbb{B}\to \mathbb{C}$ the state $\phi_N= N^{-1} Tr\otimes \phi$.

By \cite{Voiculescu:Random}, the family of matrices  $(A_N^{(1)},B_N^{(1)},\dots,A_N^{(M)}, B_N^{(M)})$ converges in distribution to a free semicircular family.

\section{Norm of real linear combinations.}
In this section we show that operator norms of real linear combinations of words of length $1$ in the matrices $A_N^{(r)}$ and $B_N^{(r)}$ do converge to operator norms of their free probability limit laws. We start with the case of a single matrix.

\subsection{A Clifford algebra picture of the entries of $A_N$.}

To simplify notation, let us set
$$A_N = A_N^{(1)} = \frac{1}{\sqrt{N}}\sum e_{pq} \otimes \myd_{pq},$$
where
\begin{eqnarray*} 
\myd_{pq} &=& \frac{1}{2}(b(h^{(1)}_{pq})  + i b(h^{(2)}_{pq})  + 
b(h^{(1)}_{qp}) - i b(h^{(2)}_{qp})) \\
&=& \frac{1}{2}\bigl( a(h^{(1)}_{pq}) + a(h^{(1)}_{pq})^* + a(i h^{(2)}_{pq}) +
a(-i h^{(2)}_{pq})^* \\ && + a(h^{(1)}_{qp}) + a(h^{(1)}_{qp})^* + 
a(-i h^{(2)}_{qp}) + a(i h^{(2)}_{qp})^*\bigr) \\
&=& a\bigl(\frac{1}{2}(h^{(1)}_{pq} + i h^{(2)}_{pq} + h^{(1)}_{qp} -i h^{(2)}_{qp})\bigr) 
\\ && + a\bigl(\frac{1}{2}( h^{(1)}_{pq} -i h^{(2)}_{pq} + h^{(1)}_{qp} + i h^{(2)}_{qp})\bigr)^* \\
&=& a(g_{pq}) + a(g_{qp})^*
\end{eqnarray*}
where $g_{pq} =\frac{1}{2}(h^{(1)}_{pq} + i h^{(2)}_{pq} + h^{(1)}_{qp} -i h^{(2)}_{qp}) $ are orthonormal vectors.

Note that $\myd_{pq}=\myd_{qp}^*$; using CAR one easily sees that $\myd_{pq}$ satisfy the Clifford relations:
$$
\myd_{pq}\myd_{q'p'}+\myd_{q'p'}\myd_{pq}=2\delta_{pp'}\delta_{qq'}.
$$

Thus the algebra generated by the entries $\myd_{pq}$ is isomorphic to the Clifford algebra $\Cl(W,B)$, where $W=M_{N\times N}(\mathbb{C})$ and $B$ is the bilinear form $B(x,y)=Tr(xy)$. 

If we denote by $S$ the underlying complex vector space of $\Cl(W,B)$, then $\dim S=2^{N^2}$. The state $\phi$ then corresponds to the normalized trace on $\textrm{End}(S)$ restricted to $\Cl(W,B)$ acting by left multiplication. 

\subsection{The \(\mathfrak{gl}_N\) module structure of the Clifford algebra}

The left multiplication action of $\Cl(W,B)$ gives rise to a natural $\mathfrak{gl}_N$ action on $S$ which will be useful to understand the action of the matrix $A_N$.  We introduce this action below. The main result of this section is Proposition~\ref{prop:decompositionOfS}, which identifies $S$ as a multiple of a single irreducible highest weight representation. We could not find this exact result in the literature; for similar results, see for example \cite[Proposition 2.4(ii), Example 2.5(1)]{panyushev} and \cite[pp. 356-358]{kostant}. We thus give a direct argument.

For \(1\le a,b\le N\), let
\[
        F_{ab}=\frac12\sum_{k=1}^N\myd_{ak}\myd_{kb}.
\]

\begin{lemma} \label{lem:FgivesAdjointAction}
For all \(a,b,p,q\),
\[
        [F_{ab},\myd_{pq}]
        =
        \delta_{bp}\myd_{aq}-\delta_{aq}\myd_{pb}.
\]
\end{lemma}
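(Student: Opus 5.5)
The plan is a direct computation from the Clifford relations $\myd_{pq}\myd_{q'p'}+\myd_{q'p'}\myd_{pq}=2\delta_{pp'}\delta_{qq'}$. Rewritten with general indices, they say
\[
\myd_{xy}\myd_{uv}+\myd_{uv}\myd_{xy}=2\delta_{xv}\delta_{yu}.
\]
The key tool is the elementary identity, valid in any associative algebra,
\[
[XY,Z]=X\{Y,Z\}-\{X,Z\}Y,
\]
where $\{\cdot,\cdot\}$ denotes the anticommutator. It converts a commutator with a quadratic element into anticommutators, and these are scalars by the Clifford relations.

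Apply this with $X=\myd_{ak}$, $Y=\myd_{kb}$ and $Z=\myd_{pq}$. The relation above gives
\[
\{\myd_{kb},\myd_{pq}\}=2\delta_{kq}\delta_{bp},\qquad \{\myd_{ak},\myd_{pq}\}=2\delta_{aq}\delta_{kp}.
\]
Therefore
\[
[\myd_{ak}\myd_{kb},\myd_{pq}]=2\delta_{kq}\delta_{bp}\,\myd_{ak}-2\delta_{aq}\delta_{kp}\,\myd_{kb}.
\]
Now sum over $k$ and multiply by $\tfrac12$. The first term collapses to $k=q$, giving $\delta_{bp}\myd_{aq}$, and the second collapses to $k=p$, giving $\delta_{aq}\myd_{pb}$. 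This yields exactly $\delta_{bp}\myd_{aq}-\delta_{aq}\myd_{pb}$.

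No step is really an obstacle. The only place requiring care is reading off the index pattern of the Clifford relation correctly: the anticommutator of $\myd_{xy}$ and $\myd_{uv}$ is nonzero precisely when $(u,v)=(y,x)$, reflecting $\myd_{xy}^*=\myd_{yx}$. With that pattern fixed, each anticommutator matches the correct Kronecker deltas and the rest is bookkeeping. The diagonal terms $k=a$ or $k=b$ need no separate treatment, since the identity holds uniformly.
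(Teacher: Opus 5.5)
Your proof is correct and is essentially the paper's argument. The paper also expands $[\myd_{ak}\myd_{kb},\myd_{pq}]$ by moving $\myd_{pq}$ past each factor using the Clifford anticommutators, which is your identity $[XY,Z]=X\{Y,Z\}-\{X,Z\}Y$ written out term by term, and your Kronecker deltas $2\delta_{kq}\delta_{bp}$ and $2\delta_{aq}\delta_{kp}$ match the ones the paper obtains.
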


\begin{proof}
Using the Clifford relations, we have:
\begin{eqnarray*}
    [F_{ab},\myd_{pq}] &=& \frac{1}{2}\sum_{k=1}^N 
    [\myd_{ak}\myd_{kb},\myd_{pq}] \\
    &=& \frac{1}{2}\sum_{k=1}^N \myd_{ak} \myd_{kb}\myd_{pq} -  \myd_{pq} \myd_{ak}\myd_{kb} \\
    &=& \frac{1}{2}\sum_{k=1}^N   2\delta_{pb}\delta_{qk}\myd_{ak}  - \myd_{ak} \myd_{pq} \myd_{kb} - 2\delta_{pk}\delta_{qa}\myd_{kb} + \myd_{ak} \myd_{pq}\myd_{kb} \\
    &=& \delta_{bp}\myd_{aq} - \delta_{aq}\myd_{pb},
\end{eqnarray*} as claimed.
\end{proof}

\begin{lemma} \label{lem:FareGLN}
The operators \(F_{ab}\) satisfy:
\[
        [F_{ab},F_{cd}]
        =
        \delta_{bc}F_{ad}-\delta_{ad}F_{cb}.
\]

Thus if $E_{ij}$ are the generators of \(\mathfrak{gl}_N\) corresponding to elementary matrices with a single nonzero entry in the $i,j$-th coordinate, then the map  $\pi_S: E_{ab}\mapsto F_{ab}$ (with $F_{ab}$  acting on $S$ by left multiplication) is a Lie algebra representation of  \(\mathfrak{gl}_N\) on $S$.
\end{lemma}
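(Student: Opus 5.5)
The plan is to reduce everything to Lemma~\ref{lem:FgivesAdjointAction} via the derivation property of the commutator. Since $F_{cd}=\frac12\sum_k \myd_{ck}\myd_{kd}$, the Leibniz rule $[X,YZ]=[X,Y]Z+Y[X,Z]$ gives
\[
[F_{ab},F_{cd}]=\frac12\sum_{k=1}^N\Bigl([F_{ab},\myd_{ck}]\myd_{kd}+\myd_{ck}[F_{ab},\myd_{kd}]\Bigr).
\]
Each inner commutator is then replaced using Lemma~\ref{lem:FgivesAdjointAction}: $[F_{ab},\myd_{ck}]=\delta_{bc}\myd_{ak}-\delta_{ak}\myd_{cb}$ and $[F_{ab},\myd_{kd}]=\delta_{bk}\myd_{ad}-\delta_{ad}\myd_{kb}$.

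Substituting these produces four sums:
\[
\frac12\sum_k\delta_{bc}\myd_{ak}\myd_{kd}=\delta_{bc}F_{ad},\qquad
-\frac12\sum_k\delta_{ak}\myd_{cb}\myd_{kd}=-\frac12\myd_{cb}\myd_{ad},
\]
\[
\frac12\sum_k\delta_{bk}\myd_{ck}\myd_{ad}=\frac12\myd_{cb}\myd_{ad},\qquad
-\frac12\sum_k\delta_{ad}\myd_{ck}\myd_{kb}=-\delta_{ad}F_{cb}.
\]
The two middle terms cancel exactly, leaving $\delta_{bc}F_{ad}-\delta_{ad}F_{cb}$, which is the claimed relation.

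For the second assertion, the elementary matrices in $\mathfrak{gl}_N$ satisfy $[E_{ab},E_{cd}]=\delta_{bc}E_{ad}-\delta_{ad}E_{cb}$, and these relations define the bracket on the basis $\{E_{ab}\}$. Extending $\pi_S$ linearly, the computed identity shows that $\pi_S$ preserves brackets on basis elements, and hence by bilinearity on all of $\mathfrak{gl}_N$. Since left multiplication by $\Cl(W,B)$ on $S$ is an algebra homomorphism into $\mathrm{End}(S)$, commutators in the Clifford algebra map to commutators of operators. Therefore $\pi_S$ is a Lie algebra representation on $S$.

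The main obstacle is purely bookkeeping. One must check that the index substitutions use the correct Kronecker deltas and confirm that the cross terms cancel rather than add. No ordering issue arises in the cross terms, since both are the same product $\myd_{cb}\myd_{ad}$ appearing with opposite signs.
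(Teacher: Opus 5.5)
Your proposal is correct and is essentially the paper's proof: expand $[F_{ab},F_{cd}]$ by the derivation property, substitute Lemma~\ref{lem:FgivesAdjointAction}, and observe that the two cross terms $\mp\frac12\myd_{cb}\myd_{ad}$ cancel. Your explicit remark about matching the bracket relations of the $E_{ab}$ via left multiplication is a slightly more detailed version of the paper's one-line conclusion.
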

\begin{proof}
We compute, using the previous Lemma and the fact that $[F_{ab},\cdot]$ is a derivation:
\begin{eqnarray*}
     [F_{ab},F_{cd}] &=&  \frac12\sum_{k=1}^N [F_{ab},\myd_{ck}\myd_{kd}] 
        = \frac12\sum_{k=1}^N [F_{ab},\myd_{ck}]\myd_{kd} +
            \myd_{ck}[F_{ab},\myd_{kd}] \\
        &=&  \frac12\sum_{k=1}^N \delta_{bc}\myd_{ak}\myd_{kd} - \delta_{ak}\myd_{cb}\myd_{kd} + \myd_{ck} \delta_{bk} \myd_{ad} 
            -\myd_{ck}\delta_{ad} \myd_{kb} \\
        &=& \delta_{bc} F_{ad} - \frac12 \myd_{cb}\myd_{ad} + \frac12 \myd_{cb}\myd_{ad} - \delta_{ad}F_{cb} \\
        &=& \delta_{bc} F_{ad}  - \delta_{ad}F_{cb},
\end{eqnarray*} as claimed.  These are exactly the relations between the generators $E_{ab}$ of $\mathfrak{gl}_N$ and thus the map $E_{ab}\mapsto F_{ab}$ is a Lie algebra homomorphism. 
\end{proof}

\subsection{The highest weight vectors $h_{I,\eta}$}

Given a subset $I\subset\{1,\dots,N\}$,  define 
    $$d_I = \prod_{p\in I} \myd_{pp}$$ (ordered product taken in increasing order).   

    Fix once and for all an order on the set $\{(p,q):p<q\}$. Given a function $\eta:\{(p,q):p<q\}\to\{0,1\}$, let $$
    y^\eta_{pq} = \begin{cases*} \frac{1}{2} \myd_{pq}\myd_{qp},& if $\eta(p,q)=0$\\ \myd_{pq},& otherwise.\end{cases*}$$ Finally, let $$
        h_{I,\eta} =  d_I \prod_{p<q} y_{pq}^\eta 
    $$ (product taken in the fixed order on the index set), and consider the submodule $S_{I,\eta}$ of $S$ generated by $h_{I,\eta}$.

    For each $I$ and $\eta$, define \[
    r_{I,\eta}
    =
    d_I
    \prod_{\substack{i<j\\ \eta(i,j)=1}}
    s_{ij},\qquad\text{where }  s_{ab} = \myd_{ab}+\myd_{ba}. 
    \]

\begin{lemma}\label{lem:rIeta}
        Let $h_0 = h_{\emptyset, 0} = \prod_{p<q}\frac{1}{2}\myd_{pq}\myd_{qp}$.
    Then: \begin{enumerate}[label={\rm(\roman*)}] \item $r_{I,\eta}$ is invertible  \item $h_0\,r_{I,\eta}=h_{I,\eta}.$ \end{enumerate}
\end{lemma}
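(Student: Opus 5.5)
The plan is to rely only on the Clifford relations $\myd_{pq}\myd_{q'p'}+\myd_{q'p'}\myd_{pq}=2\delta_{pp'}\delta_{qq'}$. First I will record what they say about pairs of generators. A generator $\myd_{pq}$ anticommutes with every generator $\myd_{uv}$ except $\myd_{qp}$. In particular, taking $(q',p')=(p,q)$ gives $\myd_{pq}^2=\delta_{pq}$. So $\myd_{pp}^2=1$, while $\myd_{pq}^2=0$ for $p\neq q$. For $p\neq q$ we also get $\myd_{pq}\myd_{qp}+\myd_{qp}\myd_{pq}=2$.

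For (i), each factor of $r_{I,\eta}$ is invertible, so the product is too. Indeed $\myd_{pp}^2=1$. For $a<b$,
\[
s_{ab}^2=\myd_{ab}^2+\myd_{ab}\myd_{ba}+\myd_{ba}\myd_{ab}+\myd_{ba}^2=0+2+0=2,
\]
so $s_{ab}^{-1}=\tfrac12 s_{ab}$.

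For (ii), write $h_0=\prod_{p<q}Y_{pq}$ with $Y_{pq}=\tfrac12\myd_{pq}\myd_{qp}$, in the fixed order. Each $Y_{pq}$ is a product of two generators. So $Y_{pq}$ commutes with every generator that anticommutes with both $\myd_{pq}$ and $\myd_{qp}$. That covers every $\myd_{uv}$ with $\{u,v\}\neq\{p,q\}$, and in particular every $\myd_{kk}$. Hence $Y_{pq}$ commutes with $d_I$ and with $s_{ab}$ for $(a,b)\neq(p,q)$. This gives $h_0d_I=d_Ih_0$.

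Next, the key local identity is $Y_{ij}s_{ij}=\myd_{ij}$. To check it, use $\myd_{ji}^2=0$ and $\myd_{ji}\myd_{ij}=2-\myd_{ij}\myd_{ji}$ together with $\myd_{ij}^2=0$. These give
\[
\myd_{ij}\myd_{ji}s_{ij}=\myd_{ij}\myd_{ji}\myd_{ij}=\myd_{ij}(2-\myd_{ij}\myd_{ji})=2\myd_{ij},
\]
which is exactly the identity.

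Let $\pi_1<\dots<\pi_m$, in the fixed order, be the pairs with $\eta=1$. Then
\[
h_0r_{I,\eta}=d_I\,h_0\,s_{\pi_1}\cdots s_{\pi_m}.
\]
I absorb the $s_{\pi_k}$ one at a time, for $k=1,\dots,m$. At step $k$, the factor $s_{\pi_k}$ must move leftward past the factors of the current product sitting to the right of position $\pi_k$. These are all still the quadratic $Y$'s, since only the earlier positions $\pi_1,\dots,\pi_{k-1}$ have been converted. So $s_{\pi_k}$ commutes past them with no sign and reaches $Y_{\pi_k}$. There $Y_{\pi_k}s_{\pi_k}=\myd_{\pi_k}$. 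After all $m$ steps the product is $d_I\prod_{p<q}y^\eta_{pq}=h_{I,\eta}$.

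The only delicate point is this sign bookkeeping. Once a factor becomes the odd element $\myd_{ij}$, it anticommutes with later $s$'s. Processing the pairs in increasing order is what guarantees that no $s$ ever crosses an already converted odd factor. I would state this explicitly as an induction on $k$.
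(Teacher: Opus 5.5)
Your proof is correct and follows the paper's proof: invertibility from $\myd_{pp}^2=1$ and $s_{ab}^2=2$, then the fact that the even elements $\tfrac12\myd_{pq}\myd_{qp}$ commute with $d_I$ and with $s_{ab}$ for $(a,b)\neq(p,q)$, together with the local identity $\tfrac12\myd_{ij}\myd_{ji}s_{ij}=\myd_{ij}$. Your induction, absorbing the $s_{\pi_k}$ in increasing order so that each one crosses only even factors, makes explicit the sign-free reordering that the paper's final displayed step leaves implicit. This step relies on reading the product defining $r_{I,\eta}$ as taken in the fixed order, which the paper also implicitly assumes.
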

\begin{proof}
    If $a<b$, then $s_{ab}^2 = \myd_{ab} \myd_{ab} + \myd_{ab}\myd_{ba}+\myd_{ba}\myd_{ab} + \myd_{ba}\myd_{ba} = 2$ so that $s_{ab}$ is invertible and $s_{ab}^{-1}= \frac{1}{2} s_{ab}$.  Furthermore, $\myd_{pp} \myd_{pp}=1$, so $\myd_{pp}$ is also invertible. Thus $r_{I,\eta}$ is a product of invertible elements.  This gives (i).

    For (ii), we note that if $p<q$, $a<b$, and $(p,q)\neq(a,b)$, then  the even element
    $\myd_{pq}\myd_{qp}$ commutes with $s_{ab}  = \myd_{ab}+\myd_{ba}$, and thus $$\bigl(\frac{1}{2}\myd_{pq}\myd_{qp} \bigr)s_{ab}=s_{ab} \bigl(\frac{1}{2}\myd_{pq}\myd_{qp}\bigr).$$
    If on the other hand $(p,q)=(a,b)$, then
    \begin{eqnarray*}
        \bigl(\frac{1}{2}\myd_{pq}\myd_{qp}\bigr) s_{pq} &=& \frac{1}{2}(\myd_{pq}\myd_{qp}\myd_{pq} + \myd_{pq}\myd_{qp} \myd_{qp}) \\
        &=& \frac{1}{2} \myd_{pq} (2-\myd_{pq}\myd_{qp}) + \frac{1}{2} \myd_{pq}\myd_{qp} \myd_{qp} \\&=& \myd_{pq} - \frac{1}{2} \myd_{pq}\myd_{pq} \myd_{qp} + \frac{1}{2}\myd_{pq}\myd_{qp}\myd_{qp} \\ &=& \myd_{pq}.
    \end{eqnarray*}
    Similarly, if $p<q$ and $a$ is arbitrary, then $$\bigl(\frac{1}{2}\myd_{pq}\myd_{qp}) \myd_{aa}
    =\myd_{aa} \bigl(\frac{1}{2} \myd_{pq} \myd_{qp}) $$ because  $(p,q)\neq(a,a)$, $(q,p)\neq (a,a)$, and thus the even element $\myd_{pq}\myd_{qp}$ commutes with $\myd_{aa}$.  Thus
    \begin{eqnarray*}
        h_0 r_{I,\eta} &=& \prod_{p<q} \frac{1}{2} \myd_{pq}\myd_{qp} \cdot \prod_{a\in I}\myd_{aa} \cdot \prod_{\substack{i<j\\ \eta(i,j)=1}} 
    s_{ij}\\ &=& \prod_{a\in I}\myd_{aa} \cdot \prod_{p<q} \frac{1}{2} \myd_{pq}\myd_{qp} \cdot \prod_{\substack{i<j\\ \eta(i,j)=1}} s_{ij} \\ &=& \prod_{a\in I}\myd_{aa} 
    \prod_{p<q} y_{pq}^\eta = h_{I,\eta}.
    \end{eqnarray*}
\end{proof}

\begin{lemma}
    Let $S_{I,\eta}$ be the $\mathfrak{gl}_N$-submodule of $S$ generated by $h_{I,\eta}$. Then $S_{I,\eta}$ is isomorphic as a $\mathfrak{gl}_N$ module to the irreducible module $L_\nu$ with highest weight $\nu = (N-1/2, N-3/2,\dots,3/2,1/2)$ and dimension $2^{N(N-1)/2}$.
\end{lemma}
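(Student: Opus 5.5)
First reduce to the single vector $h_0=h_{\emptyset,0}$. By Lemma~\ref{lem:rIeta}, $h_{I,\eta}=h_0r_{I,\eta}$ with $r_{I,\eta}$ invertible. The operators $F_{ab}$ act on $S$ by left multiplication, so they commute with right multiplication by $r_{I,\eta}$. Hence $x\mapsto xr_{I,\eta}$ is a $\mathfrak{gl}_N$-module isomorphism $S_{\emptyset,0}\to S_{I,\eta}$. It therefore suffices to show that $S_{\emptyset,0}\cong L_\nu$.

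Next I will show that $h_0$ is a highest weight vector of weight $\nu$. I use the fermionic picture: $\myd_{pq}=a(g_{pq})+a(g_{qp})^*$. For $p<q$ this gives $\tfrac12\myd_{pq}\myd_{qp}=\tfrac12(1+\gamma_{pq})$ with $\gamma_{pq}=\pm(2n_{pq}-1)$ or similar, which is a rank-half projection. Concretely, $\tfrac12\myd_{pq}\myd_{qp}$ is an idempotent $P_{pq}$ satisfying $\myd_{pq}P_{pq}=0$ or $\myd_{qp}P_{pq}=0$. I will compute this directly from the Clifford relations: $\myd_{qp}\myd_{pq}\myd_{qp}=(2-\myd_{pq}\myd_{qp})\myd_{qp}$, and $\myd_{qp}^2=\myd_{qp}\myd_{qp}$ can be computed similarly.

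For $a<b$ I write $F_{ab}=\tfrac12\sum_k\myd_{ak}\myd_{kb}$. Each term should kill $h_0$, because it contains a factor $\myd_{kb}$ or $\myd_{ak}$ with an index ``pointing upward'' that annihilates the corresponding projector. To see this, I move that factor next to its projector; it anticommutes past the other, even, factors. This sign and index bookkeeping, for $k<a$, $a\le k\le b$ and $k>b$, is the routine but fiddly part.

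For the weights, $F_{aa}=\tfrac12\myd_{aa}^2+\tfrac12\sum_{k\ne a}\myd_{ak}\myd_{ka}=\tfrac12+\tfrac12\sum_{k\neq a}\myd_{ak}\myd_{ka}$. On $h_0$, each $\myd_{ak}\myd_{ka}$ acts as $2$ when $a<k$, since the projector is $\tfrac12\myd_{ak}\myd_{ka}$, and as $0$ when $a>k$. This gives weight $\tfrac12+(N-a)=N-a+\tfrac12$, matching $\nu$.

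Irreducibility comes from a dimension count. Since $S$ is a finite-dimensional $\mathfrak{gl}_N$-module, the vector $h_0$, annihilated by the upper nilpotent subalgebra, generates a module whose irreducible quotient is $L_\nu$. To conclude that $S_{\emptyset,0}\cong L_\nu$, I would argue as follows. The family $\{h_{I,\eta}\}$ has $2^N\cdot 2^{N(N-1)/2}$ members, and Weyl's dimension formula gives $\dim L_\nu=\prod_{i<j}\frac{\nu_i-\nu_j+j-i}{j-i}=\prod_{i<j}2=2^{N(N-1)/2}$. Then $2^N\dim L_\nu=2^{N^2}=\dim S$.

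I expect this irreducibility step to be the main obstacle. The plan is to show that the $S_{I,\eta}$ together span $S$, so that $\dim S\le\sum\dim S_{I,\eta}$. Alternatively, I could use complete reducibility: $\mathfrak{gl}_N$ acts via a representation integrable to $U(N)$, because the $F_{ab}$ lie in the spin representation of $\mathfrak{so}(2N^2)\supset\mathfrak{u}(N)$. Then $S_{\emptyset,0}$, being generated by a highest weight vector in a completely reducible module, is exactly $L_\nu$. That alternative makes the dimension count unnecessary for this lemma, and it is the route I would take first.
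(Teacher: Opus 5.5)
Your proposal is correct and is essentially the paper's proof: the same reduction to $h_0$ via right multiplication by the invertible $r_{I,\eta}$, the same index case analysis showing that the $F_{ab}$ with $a<b$ kill $h_0$ and that $F_{aa}h_0=(N-a+\tfrac12)h_0$, then complete reducibility (which the paper simply asserts) and Weyl's dimension formula. One correction: since $\nu$ is half-integral, the action does not integrate to $U(N)$ itself, so justify semisimplicity instead by $F_{ab}^*=F_{ba}$ for the trace inner product $\langle x,y\rangle=\phi(x^*y)$ on $S$ (or by Weyl's theorem for $\mathfrak{sl}_N$, noting that the identity acts by the scalar $N^2/2$); also drop the dimension-count fallback, because spanning only gives the already-known bound $\dim S_{\emptyset,0}\ge\dim L_\nu$, and the product there should read $2^N\cdot 2^{N(N-1)/2}\cdot\dim L_\nu=2^{N^2}$.
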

\begin{proof}
By Lemma~\ref{lem:rIeta}, it's sufficient to prove that $S_{\emptyset,0}$, the submodule generated by $h_0$, is $L_\nu$.  Indeed, the invertible map $$x\mapsto x r_{I,\eta}$$ clearly commutes with the left multiplication action of $\Cl(W,B)$ on itself, and thus with the action of $\mathfrak{gl}_N$; moreover, it carries $h_0$ into $h_{I,\eta}$. 

The action of $\mathfrak{gl}_N$ on $S$ is semisimple, and thus $S$ decomposes into a direct sum of irreducible modules.

We now check that $h_0 =\prod_{p<q} \frac{1}{2}\myd_{pq}\myd_{qp} $ is a highest weight vector (see e.g. \cite[\S3.2.1]{GoodmanWallach}).  Note that if $(p,q)\neq (p',q')$, $p<q$, $p'<q'$, then $\myd_{pq}\myd_{qp}$ and $\myd_{p'q'}\myd_{q'p'}$ commute. Thus we can reorder the terms in the product defining $h_0$ at will.

We first check that the action of the raising operators $F_{ab}$ for $a<b$ is trivial.

Assume that $a<k$ and write $h_0 = \myd_{ak}\myd_{ka} h'$, where $h'$ is (one half times) the product of $\myd_{pq}\myd_{qp}$ for $p<q$ and $(p,q)\neq (a,k)$. 

Then \begin{eqnarray*}
    \myd_{ak}\myd_{kb} h_0 &=&  \myd_{ak}\myd_{kb} \myd_{ak}\myd_{ka} h' \\
    &=&  - \myd_{ak} \myd_{ak} \myd_{kb} \myd_{ka} h' = 0,
\end{eqnarray*}
since $\myd_{ak}^2=0$.

Assume now that $a\geq k$, so $k<b$.  Write $h_0 = \myd_{kb}\myd_{bk} h''$.  Then $$
    \myd_{ak}\myd_{kb} h_0  =  \myd_{ak}\myd_{kb} \myd_{kb}\myd_{bk} h'' = 0,
$$
since   $\myd_{kb} \myd_{kb}=0$.

It follows that for any $a<b$,  $$F_{ab} h_0 = \frac{1}{2}\sum_k \myd_{ak}\myd_{kb} h_0 =0.$$

Next we check that $h_0$ is an eigenvector for the Cartan subalgebra. Suppose that $a<k$.  
Write $h_0 = \myd_{ak}\myd_{ka}h'$.  
\begin{eqnarray*}
    \frac{1}{2}\myd_{ak}\myd_{ka} h_0 &=& \frac{1}{2}\myd_{ak}\myd_{ka}   \myd_{ak}\myd_{ka}h' \\ &=& \frac{1}{2}\myd_{ak} (2 - \myd_{ak}\myd_{ka}) \myd_{ka} h' = h_0.
\end{eqnarray*}
Suppose that $a= k$. Then $$\frac{1}{2} \myd_{aa}\myd_{aa} h_0 
= \frac{1}{2} h_0.$$
Finally, suppose that $a>k$. Write 
$h_0 = \myd_{ka}\myd_{ak} h''$.  Then
$$ \frac{1}{2}\myd_{ak}\myd_{ka} h_0=  \frac{1}{2}\myd_{ak}\myd_{ka} \myd_{ka}\myd_{ak} h''=0.$$
Thus
\begin{eqnarray*}
\pi_S(E_{aa})h_0 &=& \frac{1}{2} \sum_{k:1\leq k\leq N} \myd_{ak} \myd_{ka} h_0 
\\ &=& \sum_{k:a<k\leq N} \frac{1}{2} \myd_{ak} \myd_{ka} h_0  
+ \sum_{k:k=a}\frac{1}{2}\myd_{kk}\myd_{kk} h_0 + \frac{1}{2}\sum_{k:a>k\geq 1} \myd_{ak} \myd_{ka} h_0 \\
&=& \left(\sum_{k:a<k}  h_0\right) + \frac{1}{2}h_0 
= (N-a+\frac{1}{2}) h_0. 
\end{eqnarray*}
Thus if we fix $\lambda_1,\dots,\lambda_N$ and consider the Cartan subalgebra element $$H^\lambda =\sum_{a=1}^N \lambda_a E_{aa}  $$ then $$\pi_S(H^\lambda) h_0 = \sum_{a=1}^N 
\left[\bigl(N-a+\frac{1}{2}\bigr)\lambda_a  \right] h_0.$$ 
It follows that $h_0$ is indeed the highest weight vector with weight $\nu$. 
Since the $\mathfrak{gl}_N$ action is semisimple, the cyclic submodule generated by a highest-weight vector of weight $\nu$ is a direct sum of copies of $L_\nu$; but because it is generated by a single highest-weight vector, it must be a single copy and thus is isomorphic to $L_\nu$.

By the Weyl dimension formula (see e.g., \cite[\S7.1, Theorem 7.1.9]{GoodmanWallach}), $$\dim L_\nu = \prod_{1\leq p<q\leq N} \frac{N-p+\frac{1}{2} - N+q -\frac{1}{2} +q-p }{q-p} =2^{N(N-1)/2}.$$ 
\end{proof}
\begin{proposition} \label{prop:decompositionOfS}
    Let $S_{I,\eta}$ be the $\mathfrak{gl}_N$-submodule of $S$ generated by $h_{I,\eta}$.  Then $S=\bigoplus_{I,\eta} S_{I,\eta}$ as $\mathfrak{gl}_N$-modules. 
    
    Thus $S$ is isomorphic to a multiple of a highest weight representation: $$S\cong L^{\oplus 2^{N(N+1)/2} }_\nu,\qquad \nu = (N-1/2,N-3/2,\dots,3/2,1/2).$$
\end{proposition}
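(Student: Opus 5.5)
The plan is to show that the sum $\sum_{I,\eta} S_{I,\eta}$ is direct and then count dimensions. There are $2^N$ subsets $I$ and $2^{N(N-1)/2}$ functions $\eta$, so there are $2^{N(N+1)/2}$ pairs $(I,\eta)$. By the previous lemma each $S_{I,\eta}\cong L_\nu$ has dimension $2^{N(N-1)/2}$. The total is therefore $2^{N(N+1)/2}\cdot 2^{N(N-1)/2}=2^{N^2}=\dim S$. Hence directness of the sum suffices: it gives a direct sum of the right dimension, which must equal $S$.

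To prove directness, I would use the standard fact about semisimple modules. A sum of irreducible submodules, each isomorphic to $L_\nu$ and each generated by a highest weight vector $h_{I,\eta}$ of weight $\nu$, is direct as soon as the $h_{I,\eta}$ are linearly independent in $S$. The reason is that the $\nu$-weight space of $\bigoplus$-type sums of copies of $L_\nu$ consists exactly of the highest weight vectors. Equivalently, $\sum S_{I,\eta}$ is isotypic, so $\sum S_{I,\eta}\cong L_\nu^{\oplus m}$ with $m$ equal to the dimension of its $\nu$-weight space. That weight space is spanned by the $\nu$-weight components of the elements $\mathcal U(\mathfrak{gl}_N)h_{I,\eta}$, and in each $L_\nu$ the $\nu$-weight space is one-dimensional and spanned by $h_{I,\eta}$. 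So $m=\dim\operatorname{span}\{h_{I,\eta}\}$, and it remains to prove that the $2^{N(N+1)/2}$ vectors $h_{I,\eta}$ are linearly independent in $S=\Cl(W,B)$.

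For linear independence, which I expect to be the main (though concrete) obstacle, I would use a PBW/monomial basis of the Clifford algebra. For $p<q$ the pair $\myd_{pq},\myd_{qp}$ satisfies $\myd_{pq}^2=\myd_{qp}^2=0$ and $\myd_{pq}\myd_{qp}+\myd_{qp}\myd_{pq}=2$. This makes it a copy of $M_2(\mathbb{C})$: the span of $1,\myd_{pq},\myd_{qp},\myd_{pq}\myd_{qp}$ is $4$-dimensional. The diagonal generators $\myd_{pp}$ satisfy $\myd_{pp}^2=1$, giving $\{1,\myd_{pp}\}$. Ordered products of one chosen element from each factor form a basis of $\Cl(W,B)$, since there are $2^{N}\cdot 4^{N(N-1)/2}=2^{N^2}$ such products, each a distinct standard monomial in the generators.

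Expanding $h_{I,\eta}=d_I\prod_{p<q}y^\eta_{pq}$ in this basis, the factors for distinct blocks are (up to sign) independent. In block $(p,q)$ the factor is either $\frac12\myd_{pq}\myd_{qp}$ or $\myd_{pq}$, which are distinct basis monomials of that block. In the diagonal part the factor is $\myd_{pp}$ or $1$. Therefore each $h_{I,\eta}$ is, up to sign, a single standard monomial, and different $(I,\eta)$ give different monomials. This proves independence.

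The only care needed is the sign bookkeeping when reordering into the standard order. Signs do not affect linear independence, since each $h_{I,\eta}$ is $\pm$ a distinct basis element. With independence established, the directness argument and the dimension count finish the proof of the decomposition $S\cong L_\nu^{\oplus 2^{N(N+1)/2}}$.
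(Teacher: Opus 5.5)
Your proposal is correct and follows essentially the same route as the paper. Linear independence of the $h_{I,\eta}$ comes from each being, up to a nonzero scalar, a distinct ordered monomial in a PBW-type basis of $\Cl(W,B)$. Directness of $\sum_{I,\eta} S_{I,\eta}$ comes from detecting any relation in the $\nu$-weight space; the paper phrases this as: a nonzero kernel of $\bigoplus_{I,\eta} S_{I,\eta}\to S$ would contain a highest weight vector, i.e.\ a linear relation among the $h_{I,\eta}$. The dimension count $2^{N(N+1)/2}\cdot 2^{N(N-1)/2}=2^{N^2}=\dim S$ then finishes, and your block-grouped ordering of the generators only spares some sign bookkeeping, which is cosmetic.
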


\begin{proof}
    Fix an ordering on the set of pairs $(p,q)$ so that $(p,q)<(q',p')$ if $p<q$ and $p'<q'$, and so that $(p',p')<(p,q)$ for any $p'$ and any $p<q$.  Then the set of ordered products $\myd_{\alpha_1}\cdots \myd_{\alpha_k}$ where $\alpha_j < \alpha_{j+1}$ for all $j$ as $k$ ranges from $0$ to $N^2$ form a linear basis for $\Cl(W,B)=S$.  It is not hard to see that in this case each $h_{I,\eta}$ is, up to a constant multiple and sign, one of such monomials.  Thus the highest weight vectors $(h_{I,\eta} : I\subset \{1,\dots,N\}, \eta:\{(p,q):p<q\}\to \{0,1\})$ are linearly independent. The dimension of their span is the number of such vectors, which is $2^{N}\cdot 2^{|\{(p,q):1\leq p<q\leq N|\}|} = 2^{N}2^{N(N-1)/2} = 2^{N(N+1)/2}$.

    The natural map \( \bigoplus_{I,\eta} S_{I,\eta}\to S\)  is injective. Indeed, if its kernel were nonzero, semisimplicity would imply that the kernel contains a highest weight vector. Such a vector would give a nontrivial linear relation among the highest weight vectors \(h_{I,\eta}\), which is a contradiction. 

    Finally, 
    \begin{eqnarray*}
        \dim \bigoplus_{I,\eta} S_{I,\eta} &=& 2^{N}\cdot  2^{|\{(p,q):1\leq p<q\leq N\}|}\cdot \dim S_{I,\eta} 
        \\ 
    &=& 2^{N} 2^{N(N-1)/2} 2^{N(N-1)/2} = 2^{N+N(N-1)}=2^{N^2} \\
    &=& \dim S.
    \end{eqnarray*}
     It follows that $S$ must equal the direct sum of $S_{I,\eta}$.  
\end{proof}

\subsection{The  spectrum of $A_N$}

Let \(V=\mathbb{C}^N\), and let \(V^*\) be its dual with the contragredient
representation $\pi_{V^*}$ of \(\mathfrak{gl}_N\). If \(\varepsilon^1,\dots,\varepsilon^N\) is the
dual basis, then
\[
        \pi_{V^*}(E_{ab})\varepsilon^j=-\delta_{aj}\varepsilon^b.
\]
Equivalently, as an ordinary matrix on the dual basis,
\[
        \pi_{V^*}(E_{ab})=-e_{ba}.
\]

Define
\[
        Y=\sum_{a,b=1}^N \pi_{V^*}(E_{ab})\otimes\myd_{ba}
        = -\sum_{a,b=1}^N e_{ba}\otimes\myd_{ba} 
        \in \operatorname{End}(V^*\otimes S).
\]
Thus $$A_N = -\frac{1}{\sqrt{N}} Y.$$ (up to identifying $V$ and $V^*$ using the selected bases).  In particular, $A_N^2 = \frac{1}{N}Y^2$, so that the spectra of $A_N^2$ and $Y^2$ are related by a homothety by ${N^{-1}}$.

\begin{lemma}
    As an element of $\operatorname{End}(V^*\otimes S)$,  $Y^2$  is given by \begin{eqnarray*}
    Y^2 &=&  -2\sum_{a,b=1}^N \pi_{V^*}(E_{ab})\otimes \pi_{S}(E_{ba})
    \\ 
    &=& 2\sum_{a,b=1}^N e_{ba}\otimes F_{ba}.
    \end{eqnarray*}
\end{lemma}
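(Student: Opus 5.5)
Since the $\myd_{ba}$ act on $S$ by left multiplication, the plan is to compute $Y^2$ directly from the definition and then symmetrize. Write
\[
Y^2=\sum_{a,b,c,e}\pi_{V^*}(E_{ab})\pi_{V^*}(E_{ce})\otimes \myd_{ba}\myd_{ec}.
\]
Because $\pi_{V^*}(E_{ab})=-e_{ba}$, the product of matrix units is $\pi_{V^*}(E_{ab})\pi_{V^*}(E_{ce})=e_{ba}e_{ec}=\delta_{ae}\,e_{bc}$. Hence
\[
Y^2=\sum_{b,c}e_{bc}\otimes\sum_{a}\myd_{ba}\myd_{ac}=\sum_{b,c}e_{bc}\otimes 2F_{bc},
\]
using the definition $F_{bc}=\frac12\sum_k\myd_{bk}\myd_{kc}$. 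After renaming indices this is exactly $2\sum_{a,b}e_{ba}\otimes F_{ba}$, the second form of the claim.

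The first form then follows by substitution. By Lemma~\ref{lem:FareGLN} we have $\pi_S(E_{ba})=F_{ba}$, and $e_{ba}=-\pi_{V^*}(E_{ab})$. Substituting both gives $2\sum e_{ba}\otimes F_{ba}=-2\sum_{a,b}\pi_{V^*}(E_{ab})\otimes\pi_S(E_{ba})$.

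The only delicate point is the ordering of the Clifford products. The product in $\operatorname{End}(V^*\otimes S)$ multiplies the $S$-factors in the same order as the matrix factors, namely $\myd_{ba}\myd_{ec}$, so no sign from anticommutation arises. The matrix units carry no grading, so no Koszul sign arises from the tensor product either. Apart from this, the argument is direct index bookkeeping, and no symmetrization step is actually needed. I expect no real obstacle beyond getting this index contraction right.
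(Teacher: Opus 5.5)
Your proposal is correct and is essentially the paper's own proof: you expand $Y^2$ in the ordinary tensor product, contract $e_{ba}e_{ec}=\delta_{ae}e_{bc}$ to obtain $2\sum_{b,c}e_{bc}\otimes F_{bc}$, and then rewrite this using $e_{ba}=-\pi_{V^*}(E_{ab})$ and $\pi_S(E_{ba})=F_{ba}$. The ``symmetrize'' step announced in your opening sentence is never needed, as you note yourself, so you can simply drop it.
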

\begin{proof}
    Since $Y=\sum_{a,b=1}^N \pi_{V^*}(E_{ab}) \otimes\myd_{ba} = -\sum_{a,b=1}^N e_{ba}\otimes\myd_{ba} $, we get that
    \begin{eqnarray*} Y^2 &=& \sum_{a,b,c,d} e_{ba} e_{dc} \otimes\myd_{ba} \myd_{dc} 
    = \sum_{a,b,c,d} \delta_{ad} e_{bc}\otimes \myd_{ba}\myd_{dc} \\
    &=& \sum_{b,c}  e_{bc} \otimes \sum_{a} \myd_{ba}\myd_{ac}
    = 2\sum_{b,c}  e_{bc} \otimes F_{bc}\\
    &=& -2\sum_{b,c} \pi_{V^*}(E_{cb}) \otimes F_{bc}
    \end{eqnarray*} which is the claimed expression up to relabeling indices.
\end{proof}

\begin{lemma} \label{lemma:Y2IsCasimir}
    For a representation $\pi_H : \mathfrak{gl}_N \to \operatorname{End}(H)$, denote by $C_H$ the quadratic Casimir element \[ C_H = \sum_{ab} \pi_H(E_{ab}) \pi_H(E_{ba})\in\operatorname{End}(H).\] Then \[ Y^2=C_{V^*}\otimes 1+ 1\otimes C_S -C_{V^*\otimes S}.\]
\end{lemma}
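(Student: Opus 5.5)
The plan is to expand the Casimir of the tensor product representation and read off the cross term. The tensor product representation is
\[
\pi_{V^*\otimes S}(E_{ab})=\pi_{V^*}(E_{ab})\otimes 1+1\otimes\pi_S(E_{ab}).
\]
Substituting this into the definition of $C_{V^*\otimes S}$ and expanding the product gives
\[
C_{V^*\otimes S}=C_{V^*}\otimes 1+1\otimes C_S+\sum_{a,b}\pi_{V^*}(E_{ab})\otimes\pi_S(E_{ba})+\sum_{a,b}\pi_{V^*}(E_{ba})\otimes\pi_S(E_{ab}).
\]
The two mixed sums coincide after relabeling $a\leftrightarrow b$. Here we use that the two tensor factors act on different spaces, so $(x\otimes 1)(1\otimes y)=(1\otimes y)(x\otimes 1)=x\otimes y$; no commutators arise in the cross terms. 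Hence
\[
C_{V^*\otimes S}=C_{V^*}\otimes 1+1\otimes C_S+2\sum_{a,b}\pi_{V^*}(E_{ab})\otimes\pi_S(E_{ba}).
\]

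Next we compare this with the preceding lemma, which gives
\[
Y^2=-2\sum_{a,b}\pi_{V^*}(E_{ab})\otimes\pi_S(E_{ba}).
\]
The cross term in the display above is exactly $-Y^2$, so rearranging yields $Y^2=C_{V^*}\otimes 1+1\otimes C_S-C_{V^*\otimes S}$, which is the claim.

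There is no substantive obstacle here; the argument is a two-line expansion. The only point needing care is the index bookkeeping: the preceding lemma pairs $E_{ab}$ in the $V^*$ factor with $E_{ba}$ in the $S$ factor, and the expansion of the Casimir must be checked to produce cross terms in the same form. It does, because the Casimir is symmetric under $a\leftrightarrow b$. The sign convention $\pi_{V^*}(E_{ab})=-e_{ba}$ is already absorbed into the preceding lemma and is not used again here.
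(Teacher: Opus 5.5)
Your proof is correct and follows the paper's argument. Both expand $C_{V^*\otimes S}$ using $\pi_{V^*\otimes S}(E_{ab})=\pi_{V^*}(E_{ab})\otimes 1+1\otimes\pi_S(E_{ab})$ and identify the two cross sums with $-Y^2$ via the preceding lemma; the only difference is that the paper substitutes the explicit forms $\pi_{V^*}(E_{ab})=-e_{ba}$ and $\pi_S(E_{ab})=F_{ab}$ before expanding, while you keep the computation abstract.
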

\begin{proof}
    Since $\pi_{V^*}(E_{ab})=-e_{ba}$, $\pi_S(E_{ab})=F_{ab}$, we get: 
    \begin{eqnarray*}
         \pi_{V^*\otimes S}(E_{ab}) &=& \pi_{V^*}(E_{ab}) \otimes 1 + 1\otimes \pi_{S}(E_{ab})  \\
         &=& -e_{ba}\otimes 1 + 1\otimes F_{ab}.
    \end{eqnarray*}
Using this and the previous Lemma gives us: \begin{eqnarray*}
        C_{V^*}\otimes 1 &=& \sum_{ab} e_{ba}e_{ab}\otimes 1 \\
        1\otimes C_{S} &=& \sum_{ab} 1\otimes F_{ab} F_{ba} \\
        C_{V^*\otimes S} &=& \sum_{ab} (-e_{ba}\otimes 1+1\otimes F_{ab})(-e_{ab}\otimes 1+1\otimes F_{ba}) \\
        &=& \sum_{ab} e_{ba}e_{ab}\otimes 1 - e_{ba}\otimes F_{ba} - e_{ab}\otimes F_{ab}
        + 1\otimes F_{ab}F_{ba} \\
        &=& C_{V^*}\otimes 1 -  Y^2 + 1\otimes C_S.
    \end{eqnarray*}
    Solving the last equation for $Y^2$ gives $ Y^2=-(C_{V^*\otimes S}-C_{V^*}\otimes 1-1\otimes C_S)$.
\end{proof}

\begin{lemma} \label{lemma:casimirSpectrum}
    Let $\nu =  (N-1/2,N-3/2,\dots,3/2,1/2)$. Then the spectrum of the Casimir element $C_{V^*}\otimes 1 + 1\otimes C_{L_\nu} - C_{V^*\otimes L_\nu } $ acting on $V^*\otimes L_\nu$ is given by $$
    \sigma(C_{V^*}\otimes 1 + 1\otimes C_{L_\nu} - C_{V^*\otimes L_\nu })= \{1,5,\dots,4N-3\}
    $$
\end{lemma}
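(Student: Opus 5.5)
We will use the standard Pieri rule for tensoring with the dual of the vector representation, together with the explicit value of the quadratic Casimir on an irreducible highest weight module.

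\emph{Step 1: decompose $V^*\otimes L_\nu$.} The representation $V^*$ has weights $-\varepsilon_1,\dots,-\varepsilon_N$, each with multiplicity one. By Pieri's rule, $V^*\otimes L_\nu=\bigoplus_j L_{\nu-\varepsilon_j}$, where $j$ ranges over those indices for which $\nu-\varepsilon_j$ is still dominant. Since $\nu=(N-\tfrac12,\dots,\tfrac12)$ has strictly decreasing entries that differ by exactly $1$, every $\nu-\varepsilon_j$ remains weakly decreasing. This holds for all $j=1,\dots,N$; for instance, at $j=N$ the last entry becomes $-\tfrac12$, which is allowed for $\mathfrak{gl}_N$. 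The tensor product is therefore multiplicity free with exactly $N$ summands. As a check, we compare dimensions: by the Weyl dimension formula, $\sum_j \dim L_{\nu-\varepsilon_j}$ should equal $N\cdot 2^{N(N-1)/2}$.

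\emph{Step 2: Casimir eigenvalues.} With $C=\sum_{a,b}E_{ab}E_{ba}$, the Casimir acts on $L_\lambda$ by the scalar $c(\lambda)=\sum_a \lambda_a^2+\sum_a (N+1-2a)\lambda_a=\langle\lambda,\lambda+2\rho\rangle$. We derive this by applying $C$ to a highest weight vector. Write $E_{ab}E_{ba}=E_{ba}E_{ab}+E_{aa}-E_{bb}$ for $a<b$. The terms with $a<b$ then contribute $\lambda_a-\lambda_b$, because the raising operators $E_{ab}$ annihilate the highest weight vector, and the diagonal terms contribute $\lambda_a^2$. For $V^*$, with highest weight $(0,\dots,0,-1)$, this gives $c=1+(1-N)\cdot(-1)\cdot(-1)\cdot\ldots$; it is simpler to compute directly that $\sum_{a,b} e_{ba}e_{ab}=N\cdot 1$, so $C_{V^*}=N$.

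\emph{Step 3: compute the differences.} On the summand $L_{\nu-\varepsilon_j}$, the operator in the statement acts by the scalar $N+c(\nu)-c(\nu-\varepsilon_j)$. We compute
$$c(\nu)-c(\nu-\varepsilon_j)=2\nu_j-1+(N+1-2j)=2(N-j+\tfrac12)-1+N+1-2j=3N+1-4j.$$
Hence the scalar is $N+3N+1-4j=4N+1-4j$. As $j$ runs over $1,\dots,N$, this gives exactly the values $\{1,5,\dots,4N-3\}$. Since every summand occurs, each of these values actually lies in the spectrum.

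The main obstacle is getting the normalizations right. The representation $V^*$ is the contragredient, so the weights are $-\varepsilon_j$, and the Casimir convention $\sum E_{ab}E_{ba}$ fixes the $\rho$-shift. A single sign error would produce, for example, $\{4j-3\}$ listed in reverse order or shifted values. We will cross-check with $N=1$. There $\nu=(\tfrac12)$, $V^*\otimes L_\nu=L_{-1/2}$, and the scalar is $1+\tfrac14-\tfrac14=1$, which matches $Y^2=d_{11}^2=1$. We also check the Pieri multiplicity-freeness via the dimension count from Step 1.
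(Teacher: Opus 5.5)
Your proposal is correct and follows the paper's proof: you use Pieri's rule to get $V^*\otimes L_\nu=\bigoplus_{t=1}^N L_{\nu-\epsilon_t}$, the highest-weight evaluation $C_{L_\mu}=\sum_t \mu_t(\mu_t+N+1-2t)$ together with $C_{V^*}=N$, and the scalar $N+(3N+1-4j)=4N+1-4j$ on the $j$-th summand. The garbled highest-weight evaluation of $C_{V^*}$ in your Step 2 should simply read $(-1)\bigl((-1)+N+1-2N\bigr)=N$, which agrees with your direct computation.
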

\begin{proof}
If $L_\mu$ is the irreducible representation of $\mathfrak{gl}_N$ with highest weight $\mu=(\mu_1,\dots,\mu_N)$, then the Casimir element $C_{L_\mu}$ acts as the following scalar multiple of identity:
$$
C_{L_\mu} = \sum_{t=1}^N \mu_t (\mu_t + N+1-2t) \operatorname{id}_{L_\mu}.
$$ (the computation is similar to the one in  \cite[\S3.3.2, Lemma 3.3.8]{GoodmanWallach}, although \cite{GoodmanWallach} uses a slightly different Casimir element adapted to $\mathfrak{sl}_N$ and its Killing form: $C'=(2N)^{-1}(\sum_{ab} E_{ab}E_{ba} - N^{-1}(\sum_{a}E_{aa})^2)$ as opposed to our $C=\sum_{ab} E_{ab}E_{ba}$. In the representation $L_\mu$, $\sum_{a} E_{aa}$ acts as the $(\sum \mu_i )\operatorname{id}_{L_\mu}$  and so $C'$ differs from $C_{L_\mu}$  by an overall factor and a shift by a constant multiple of identity. Since any Casimir acts as some multiple of identity in any irreducible representation, the exact multiple can be obtained by explicitly computing the action on any vector, e.g., the highest weight vector.)

The representation $V^*$ is irreducible, and $$C_{V^*}=N \operatorname{id}_{V^*}$$ (by direct computation, or using that its weight is $(0,\dots,0,-1)$). 

With $\nu =  (N-1/2,N-3/2,\dots,3/2,1/2)$, $C_{L_\nu}$ acts as the scalar $$
C_{L_\nu} = \sum_{t=1}^N (N+\frac{1}{2}-t)(2N+\frac{3}{2}-3t ).$$

By Pieri's rule (see e.g. \cite[\S9.2, Corollary 9.2.4]{GoodmanWallach}), $$V^*\otimes L_\nu = \bigoplus_{t=1}^{N} L_{\nu-\epsilon_t}$$ where $\epsilon_t = (0,\dots,1,\dots,0)$ (non-zero entry in the $t$-th place; note that $\nu-\epsilon_t$ still has monotonically non-increasing entries and thus remains a dominant weight). Hence the Casimir $C_{V^*\otimes L_\nu}$ acts on the $r$-th irreducible component as the scalar \[ c_r = 
\sum_{t=1}^N (N+\frac{1}{2} -t -\delta_{rt})(2N+\frac{3}{2}-3t -\delta_{rt}).\]

Thus, restricting to the $r$-th irreducible component of $V^*\otimes L_\nu$, $C_{V^*}\otimes 1 + 1\otimes C_{L_\nu} - C_{V^*\otimes L_\nu } $ acts as the scalar $y_r$, given by 
\begin{eqnarray*}
    y_r &=& N + \sum_{t=1}^N (N+\frac{1}{2}-t)(2N+\frac{3}{2}-3t) 
    \\ && - \sum_{t=1}^N (N+\frac{1}{2}-t -\delta_{rt})(2N+\frac{3}{2}-3t-\delta_{rt}) \\
    &=& N + (N+\frac{1}{2}-r)(2N+\frac{3}{2}-3r) 
    \\ && - (N+\frac{1}{2}-r-1)(2N+ \frac{3}{2}-3r-1) \\
    &=& N + 2N+\frac{3}{2}-3r + N+ \frac{1}{2}-r - 1 \\
    &=& 4N-4r+1.
\end{eqnarray*}
\end{proof}

\begin{theorem}\label{thrm:spectrumOfY}
    The spectrum of $Y^2$ is the set $\{1,5,\dots,4N-3\}$. 
\end{theorem}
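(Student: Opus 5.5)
The plan is to combine the three preceding ingredients directly. By Lemma~\ref{lemma:Y2IsCasimir}, $Y^2 = C_{V^*}\otimes 1 + 1\otimes C_S - C_{V^*\otimes S}$ as operators on $V^*\otimes S$. By Proposition~\ref{prop:decompositionOfS}, $S=\bigoplus_{I,\eta} S_{I,\eta}$ with each $S_{I,\eta}\cong L_\nu$, $\nu=(N-1/2,\dots,1/2)$. This gives a $\mathfrak{gl}_N$-invariant decomposition
\[
V^*\otimes S=\bigoplus_{I,\eta} V^*\otimes S_{I,\eta}.
\]

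The first step is to check that $Y^2$ preserves each summand and that, restricted to $V^*\otimes S_{I,\eta}$, it coincides with the operator $C_{V^*}\otimes 1 + 1\otimes C_{L_\nu} - C_{V^*\otimes L_\nu}$ transported through the isomorphism $S_{I,\eta}\cong L_\nu$. The three Casimir operators are built only from the operators $\pi_{V^*}(E_{ab})\otimes 1$ and $1\otimes\pi_S(E_{ab})$. These leave each $V^*\otimes S_{I,\eta}$ invariant because $S_{I,\eta}$ is a submodule. Moreover, they intertwine with the module isomorphism $1\otimes(S_{I,\eta}\to L_\nu)$, since that isomorphism commutes with the $\mathfrak{gl}_N$ action.

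Since $Y^2$ is block diagonal with respect to this decomposition, its spectrum is the union of the spectra of the blocks. Each block is isomorphic to the operator treated in Lemma~\ref{lemma:casimirSpectrum}, whose spectrum is $\{1,5,\dots,4N-3\}$. The union over all $(I,\eta)$ is therefore the same set. Every value $4N-4r+1$, $r=1,\dots,N$, actually occurs, since each Pieri component $L_{\nu-\epsilon_r}$ is nonzero (each $\nu-\epsilon_r$ is dominant, as noted in that lemma). Hence $\sigma(Y^2)=\{1,5,\dots,4N-3\}$.

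The only point requiring care is that Lemma~\ref{lemma:Y2IsCasimir} is an operator identity on the full tensor product, so restricting it to invariant subspaces is legitimate. I expect no real obstacle; the content lies in the earlier lemmas. The statement about $A_N$ then follows from $A_N^2=N^{-1}Y^2$.
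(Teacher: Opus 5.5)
Your proposal is correct and follows essentially the same route as the paper. You use Proposition~\ref{prop:decompositionOfS} to write $V^*\otimes S$ as a direct sum of copies of $V^*\otimes L_\nu$, identify $Y^2$ with the Casimir combination via Lemma~\ref{lemma:Y2IsCasimir}, and read off the spectrum blockwise from Lemma~\ref{lemma:casimirSpectrum}; your remarks on invariance of the summands and on every eigenvalue actually occurring spell out points the paper's short proof leaves implicit.
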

\begin{proof}
By Proposition~\ref{prop:decompositionOfS} the representation $S$ is a multiple of the irreducible representation $L_\nu$ with $\nu$ as in Lemma~\ref{lemma:casimirSpectrum}. Thus  $V^*\otimes S$ is a multiple of $V^*\otimes L_\nu$.   

It follows that the spectrum of  $C_{V^*} \otimes 1 + 1\otimes C_{L_\nu} - C_{V^*\otimes L_\nu}$ (given by Lemma~\ref{lemma:casimirSpectrum}) is the same as the spectrum of $C_{V^*}\otimes 1 + 1\otimes C_{S} - C_{V^*\otimes S}$, which is equal to $Y^2$ by Lemma~\ref{lemma:Y2IsCasimir}.
\end{proof}

\begin{theorem}\label{thrm:spectrumOfA}
    The spectrum of $A_N$ is the set  $$\{-\sqrt{4-3/N},\dots,-\sqrt{5/N},-\sqrt{1/N},\sqrt{1/N},\sqrt{5/N},\dots,\sqrt{4-3/N}\}.$$
    In particular, $\Vert A_N\Vert = \sqrt{4-3/N} <2$ and $\Vert A_N\Vert\to 2$ as $N\to\infty$.
\end{theorem}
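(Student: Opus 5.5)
Since $A_N=-\frac{1}{\sqrt N}Y$, we have $A_N^2=\frac1N Y^2$. By Theorem~\ref{thrm:spectrumOfY}, the spectrum of $A_N^2$ is $\{1/N,5/N,\dots,(4N-3)/N\}$. Because $A_N$ is self-adjoint (it is the real part of $Q_N$), the spectral mapping theorem applied to $t\mapsto t^2$ gives $\sigma(A_N)\subset\{\pm\sqrt{(4k-3)/N}:1\le k\le N\}$. Moreover, for each $k$, at least one of $\pm\sqrt{(4k-3)/N}$ lies in $\sigma(A_N)$. It remains to show that both signs actually occur. Equivalently, we must show that $\sigma(A_N)$ is symmetric about $0$, with each $\lambda$ having the same multiplicity as $-\lambda$.

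For the symmetry, the plan is to exhibit a unitary (or invertible) operator that anticommutes with $Y$. The natural candidate is the grading (parity) operator of the Clifford algebra. Let $\gamma$ act on $S=\Cl(W,B)$ as left multiplication by a volume element, i.e. a product of all $N^2$ generators (suitably normalized). Alternatively, let $\gamma$ be the parity automorphism implemented on the Fock space by $(-1)^{\text{number}}$. In either case $\gamma\,\myd_{pq}\,\gamma^{-1}=-\myd_{pq}$ for all $p,q$. The cleanest choice is the Fock space parity operator $P=(-1)^{\mathcal N}$, which anticommutes with every $a(h)$ and $a(h)^*$, hence with every $\myd_{pq}$. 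Then $U=1\otimes P$ is a self-adjoint unitary on $\mathbb C^N\otimes\mathscr F$ with $UA_NU^*=-A_N$. Therefore $A_N$ and $-A_N$ are unitarily equivalent, and $\sigma(A_N)=-\sigma(A_N)$. Combined with the first step, this shows that for every $k$ both $\sqrt{(4k-3)/N}$ and $-\sqrt{(4k-3)/N}$ lie in the spectrum. Here one should note that Theorem~\ref{thrm:spectrumOfY} asserts that every value $4k-3$, $1\le k\le N$, actually occurs as an eigenvalue of $Y^2$. This is true because Pieri's rule gives all $N$ components $L_{\nu-\epsilon_t}$ as nonzero summands of $V^*\otimes L_\nu$.

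Finally, the norm is the largest eigenvalue in absolute value, $\sqrt{(4N-3)/N}=\sqrt{4-3/N}$. This is strictly less than $2$ and tends to $2$ as $N\to\infty$. The only step requiring care is the symmetry argument. The potential subtlety is making sure the parity operator is compatible with the identification of $A_N$ as an operator on $\mathbb C^N\otimes\mathscr F$ versus $V^*\otimes S$. Working directly on the Fock space with $(-1)^{\mathcal N}$ avoids this issue, since the Clifford picture was only used to compute the spectrum of $Y^2$. That spectrum is a unitary invariant and is unaffected by whether the Fock-space action of $\Cl(W,B)$ is the left-regular one or only quasi-equivalent to it: both are faithful representations of the simple algebra $\Cl(W,B)$ (of even dimension $N^2$ when $N$ is even; when $N$ is odd the algebra is a sum of two simple algebras). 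So the set of eigenvalues agrees with the one computed in $S$.
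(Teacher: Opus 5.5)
Your proposal is correct and takes essentially the same route as the paper. Like the paper, you reduce to $A_N^2=N^{-1}Y^2$ and Theorem~\ref{thrm:spectrumOfY}, then get $\sigma(A_N)=-\sigma(A_N)$ from the sign-flip automorphism $\myd_{pq}\mapsto-\myd_{pq}$; your parity operator $(-1)^{\mathcal N}$ just implements that automorphism spatially on $\mathbb{C}^N\otimes\mathscr{F}$. Your closing remark on passing between the Fock-space and left-regular pictures spells out a point the paper leaves implicit. For odd $N$, the faithfulness of the Fock representation that this remark relies on follows from the same parity operator, since it anticommutes with the odd central element of $\Cl(W,B)$.
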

\begin{proof}
    The map $\myd_{pq}\mapsto -\myd_{pq}$ is an automorphism of the Clifford algebra preserving the trace; thus the spectra of $A_N$ and $-A_N$ are the same, and so the spectrum is symmetric under negation.  Since $A_N=-N^{-1/2}Y$, $A_N^2 = N^{-1}Y^2$, and thus $\sigma(A_N)=\{\pm\sqrt{\lambda/N}:\lambda\in\sigma(Y^2)\}$. We now apply Theorem~\ref{thrm:spectrumOfY}.
\end{proof}

\begin{corollary}
Let \(A_N^{(1)},\ldots,A_N^{(M)}\) be as above, and let \(A^{(1)},\ldots,A^{(M)}\) be a free semicircular system
of variance \(1\). Then for every
\(\lambda_1,\ldots,\lambda_M\in \mathbb R\),
\[
\lim_{N\to\infty}
\left\|
\sum_{r=1}^M \lambda_r A_N^{(r)}
\right\|
=
\left\|
\sum_{r=1}^M \lambda_r A^{(r)}
\right\|
=2\left(\sum_{r=1}^M |\lambda_r|^2\right)^{1/2}.
\]
\end{corollary}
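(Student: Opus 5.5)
The plan is to reduce a real linear combination to a single rescaled copy of $A_N$ and then apply Theorem~\ref{thrm:spectrumOfA}. We may assume not all $\lambda_r$ vanish. Set $\lambda=(\sum_r\lambda_r^2)^{1/2}$ and $u_r=\lambda_r/\lambda$, so that $\sum_r u_r^2=1$. Then
\[
\sum_{r=1}^M \lambda_r A_N^{(r)} = \lambda\,\frac{1}{\sqrt N}\sum_{p,q} e_{pq}\otimes \tilde d_{pq},\qquad \tilde d_{pq}=\sum_r u_r\, d^{(r)}_{pq},
\]
where $d^{(r)}_{pq}=a(g^{(r)}_{pq})+a(g^{(r)}_{qp})^*$ is the entry of $A_N^{(r)}$, computed exactly as in the Clifford picture above with the vectors $h^{(1,r)},h^{(2,r)}$.

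The first key step is to show that the $\tilde d_{pq}$ are again of the same form. Since the $u_r$ are real, linearity of $a(\cdot)$ and antilinearity of $a(\cdot)^*$ give
\[
\tilde d_{pq}=a(\tilde g_{pq})+a(\tilde g_{qp})^*,\qquad \tilde g_{pq}=\sum_r u_r\, g^{(r)}_{pq}.
\]
The vectors $g^{(r)}_{pq}$ live in mutually orthogonal subspaces for different $r$, and they are orthonormal for fixed $r$. Hence the $\tilde g_{pq}$ are orthonormal. It follows that $\tilde d_{qp}=\tilde d_{pq}^*$, and the $\tilde d_{pq}$ satisfy the same Clifford relations $\tilde d_{pq}\tilde d_{q'p'}+\tilde d_{q'p'}\tilde d_{pq}=2\delta_{pp'}\delta_{qq'}$.

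The second step is to transfer the spectral computation. Everything in \S2 (Lemmas~\ref{lem:FgivesAdjointAction}--\ref{lemma:casimirSpectrum}, Proposition~\ref{prop:decompositionOfS} and Theorem~\ref{thrm:spectrumOfY}) used only these Clifford relations, via the isomorphism of the generated algebra with $\Cl(W,B)$. So the operator $\tilde A_N=N^{-1/2}\sum e_{pq}\otimes\tilde d_{pq}$ has spectrum contained in the set given in Theorem~\ref{thrm:spectrumOfA}.

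This is the step I expect to need the most care. The ambient Fock space is larger than the one for a single matrix, so the $C^*$-algebra generated by the $\tilde d_{pq}$ sits inside a bigger algebra. I will handle this as follows. The complex Clifford algebra on $N^2$ generators, with $N^2$ even, is simple, namely a full matrix algebra $M_{2^{N^2/2}}$. So the representation on the ambient space is faithful, and norms and spectra in $M_N\otimes\mathbb{B}$ coincide with those computed in $M_N\otimes \Cl(W,B)$ acting on $S$, which is the setting of Theorem~\ref{thrm:spectrumOfY}. Because the whole spectrum of $Y^2$ was determined there, the spectrum of $\tilde A_N$ equals, not merely is contained in, the set in Theorem~\ref{thrm:spectrumOfA}. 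Therefore
\[
\Big\|\sum_r\lambda_rA_N^{(r)}\Big\|=\lambda\sqrt{4-3/N}\;\longrightarrow\;2\lambda .
\]

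Finally, on the free side, $\sum_r\lambda_r A^{(r)}$ is semicircular of variance $\sum_r\lambda_r^2=\lambda^2$, since free semicirculars add like Gaussians (their $R$-transforms are additive). A semicircular element of variance $\lambda^2$ has norm $2\lambda$. Combining this with the previous step gives the corollary.
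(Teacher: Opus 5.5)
Your strategy is the same as the paper's. The paper asserts that sending the entries of $\sum_r\lambda_rA_N^{(r)}$ to those of $\Lambda A_N$ is an injective $*$-homomorphism, and then invokes Theorem~\ref{thrm:spectrumOfA}. Your first step is correct: the $\tilde d_{pq}$ come from the orthonormal family $\tilde g_{pq}$ and satisfy the same Clifford relations. Your free-side computation is also correct.

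The gap is in your justification of faithfulness. $N^2$ is even only when $N$ is even. For odd $N$, $\Cl(W,B)\cong \Cl_{N^2}(\mathbb{C})\cong M_{2^{(N^2-1)/2}}(\mathbb{C})\oplus M_{2^{(N^2-1)/2}}(\mathbb{C})$ has a nontrivial center, spanned by $1$ and a multiple of the product of an orthonormal set of Majorana generators. So simplicity is unavailable, and a representation could a priori factor through one summand. In that case the spectra, and a priori the norms, on the Fock space need not match those computed on $S$. The limit runs over all $N$, so odd $N$ cannot be discarded. As written, your argument covers only even $N$.

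The repair is short and uses exactly what you already proved. Both $\{g^{(1)}_{pq}\}$ and $\{\tilde g_{pq}\}$ are orthonormal families of $N^2$ vectors in the finite-dimensional space $H\otimes_{\mathbb{R}}\mathbb{C}$. Hence there is a unitary $U$ on this space with $Ug^{(1)}_{pq}=\tilde g_{pq}$. Its second quantization $\Gamma(U)$ on $\mathscr{F}$ satisfies $\Gamma(U)a(x)\Gamma(U)^*=a(Ux)$. Therefore $\Gamma(U)d_{pq}\Gamma(U)^*=\tilde d_{pq}$, and $(1\otimes\Gamma(U))A_N(1\otimes\Gamma(U))^*=\tilde A_N$. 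So $\sum_r\lambda_rA_N^{(r)}$ is unitarily equivalent to $\lambda A_N$ for every $N$, with no appeal to the structure theory of Clifford algebras.

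Alternatively, for odd $N$, pick $x$ orthogonal to all $\tilde g_{pq}$. Then the self-adjoint unitary $b(x)$ anticommutes with every $\tilde d_{pq}$, so conjugation by it sends the central element to its negative. Both central summands are therefore represented, and the representation is faithful.
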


\begin{proof}
Let $\Lambda = (\sum_{r=1}^M |\lambda_r|^2)^{1/2}$.  The map that sends the $i,j$-th entry $\sum \lambda_r A_N^{(r)}$ to that of $\Lambda\cdot  A_N$ is an injective $*$-homomorphism of the  $C^*$-algebras generated by the entries.  Thus $\Vert \sum \lambda_r A_N^{(r)}\Vert = \Lambda \Vert A_N\Vert$.  We now apply Theorem~\ref{thrm:spectrumOfA}.
\end{proof}

\subsection{The spectral measure of $A_N$}\label{subsec:SpectralMeasureAN}

While we do not need this for the rest of the paper, we give an exact expression for the spectral measure of $A_N^2$ with respect to the state $\phi_N$.  Indeed the spectral measure is the weighted sum of delta functions $$\mu_{A_N^2}=\sum_{r=0}^{N-1} w_r \delta_{(4r+1)/N}.$$ The weight $w_r$ is the multiplicity of the $r$-th eigenvalue normalized by the dimension of the entire space. 
Since the $r$-th eigenvalue arises from $L_{\nu-\epsilon_{N-r}}$ viewed as an irreducible component of $V^*\otimes L_\nu$, we see that  $$w_r =  \frac{2^{N(N+1)/2}\dim L_{\nu-\epsilon_{N-r}}}{2^{N(N+1)/2}\dim V^*\otimes L_\nu}
= \frac{1}{N} \frac{\dim L_{\nu-\epsilon_{N-r}}}{\dim L_\nu}.$$  Using Weyl's dimension formula we end up with the expression
$$w_r = \frac{1}{4^{N-1}} G(r,N-1-r)$$ with
$$G(r,s)=\frac{2s+1}{r+s+1}\binom{2r}{r}\binom{2s}{s}.$$
In particular, this leads to the moment formula
\[
\phi_{N}(A^{2p}_N) =  \frac{1}{4^{N-1}} \frac{1}{N^p} \sum_{r=0}^{N-1} G(r,N-1-r)(4r+1)^p .
\]
Using a standard genus expansion argument for random matrices one can interpret the moments $\phi_N(A_N^{2p})$ as signed sums over pair-partitions (see \cite{Shlyakhtenko:Matrices}). This leads to the following combinatorial identity, valid for $p,N\geq 1$:
\begin{equation}\label{eq:MomentsAreSuperBallots}
\sum_{\pi \in \mathcal{P}_2(2p)} (-1)^{\operatorname{cr}(\pi)} N^{\#(\gamma\pi)} = \frac{N}{4^{N-1}} \sum_{r=0}^{N-1} G(r,N-1-r) (4r+1)^p.
\end{equation}
Here each $\pi$ is a pair-partition, $\operatorname{cr}(\pi)$ is the number of crossings of $\pi$, and $\#(\gamma\pi)$ is the number of cycles of $\gamma \pi$.  We note that the numbers $G(r,s)$ have appeared in the work of Gessel \cite{gessel} on super ballot numbers.

\section{Failure of strong convergence}
In this section we show that already for linear combinations of words of length $2$ in the matrices $A_N^{(r)}, B_N^{(r)}$ convergence of operator norms may fail.

We start with a few preliminary computations. Fix $M,N\geq 1$ and consider the operators $Q_N^{(k)}$, $k=1,\dots,M$, acting on the tensor product $\mathbb{C}^N\otimes \mathscr{F}$ of $\mathbb{C}^N$ and the antisymmetric Fock space $\mathscr{F}$. 

\subsection{The vector $\Omega_{J,M}$}
Let \[
d^{(k)}_{pq}:=\frac12\bigl(b(h^{(1,k)}_{pq})+i b(h^{(2,k)}_{pq})\bigr)
\] so that 
\[ Q_N^{(k)} = \frac{2}{\sqrt{N}}\sum_{p,q} e_{pq} \otimes  d^{(k)}_{pq}.\]

Let 
\[
    \Omega_{J,M}
    =
    2^{-MN^2/2}
    \prod_{k=1}^M\prod_{p,q=1}^N
    \left(
      1-i\,a(h^{(1,k)}_{pq})^*a(h^{(2,k)}_{pq})^*
    \right)\Omega,
\]
The order of the product defining $\Omega_{J,M}$ does not matter, since the operators $a(h^{(1,k)}_{pq})^*$ anticommute, and thus the operators $a(h^{(1,k)}_{pq})^*a(h^{(2,k)}_{pq})^*$ commute, for different triples $(p,q,k)$.

The next Lemma shows that the vector $\Omega_{J,M}$ behaves like a vacuum vector for the operators $d_{pq}^{(k)}$, which themselves turn out to form a CAR family.

\begin{lemma} \label{lemma:propertiesOfOmega}
    With the above notation, the following properties hold: 
    \begin{enumerate}[label={\rm(\roman*)}]
    \item $\Omega_{J,M}$ is a unit vector 
    \item For every \(k,p,q\), \(
    d^{(k)}_{pq}\Omega_{J,M}=0. \) 
    \item For every \(k,p,q,r\), \(
    d^{(k)}_{rq}(d^{(k)}_{pq})^*\Omega_{J,M}
    =
    \delta_{rp}\Omega_{J,M}.
    \)
    \end{enumerate}
\end{lemma}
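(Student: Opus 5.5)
The plan is to exploit the fact that $\Omega_{J,M}$ is a product of commuting even elements applied to $\Omega$, one factor for each triple $(k,p,q)$. This reduces (i) and (ii) to a computation in the four-dimensional fermionic space of a single pair of modes, and (iii) then follows from (ii) together with the anticommutation relations among the $d^{(k)}_{pq}$.

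\textbf{Part (i).} Write $X_{kpq}=a(h^{(1,k)}_{pq})^*a(h^{(2,k)}_{pq})^*$. These are commuting even operators.
\begin{itemize}
\item Expanding the product gives
\[
\prod_{k,p,q}(1-iX_{kpq})\Omega=\sum_{T}(-i)^{|T|}\prod_{(k,p,q)\in T}X_{kpq}\Omega ,
\]
where $T$ ranges over subsets of the index set.
\item Each term is, up to sign, a distinct elementary wedge of the orthonormal basis vectors $h^{(r,k)}_{pq}$. Hence the terms are orthonormal.
\item There are $2^{MN^2}$ terms, so the squared norm is $2^{MN^2}$. The normalization $2^{-MN^2/2}$ therefore makes $\Omega_{J,M}$ a unit vector.
\end{itemize}

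\textbf{Part (ii).} Here $d^{(k)}_{pq}$ is odd and built from the two modes $a_1=a(h^{(1,k)}_{pq})$ and $a_2=a(h^{(2,k)}_{pq})$ only.
\begin{itemize}
\item By CAR, $a_1,a_1^*,a_2,a_2^*$ anticommute with the creation operators of every other triple. Hence they commute with the even factors $1-iX_{k'p'q'}$ for $(k',p',q')\neq(k,p,q)$.
\item So we may move $d^{(k)}_{pq}$ through all other factors and reduce to showing $d^{(k)}_{pq}(1-iX_{kpq})\Omega=0$. We then reapply the other factors, which are unaffected.
\item Set $e_j=a_j^*\Omega$. Using $a_j\Omega=0$, $a_1(a_1^*a_2^*\Omega)=e_2$ and $a_2(a_1^*a_2^*\Omega)=-e_1$, together with $a_j^*a_1^*a_2^*\Omega=0$, one finds
\[
2d^{(k)}_{pq}\,(\Omega-ia_1^*a_2^*\Omega)=(e_1+ie_2)-i(e_2-ie_1)=0 .
\]
\end{itemize}
This sign bookkeeping in the two-mode computation is the only place where care is needed. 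I regard it as the main, though minor, obstacle; one must track the ordering of the wedge $e_1\wedge e_2$ consistently.

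\textbf{Part (iii).} First compute the anticommutators. Since $b(h)b(g)+b(g)b(h)=2\langle h,g\rangle$ for real $h,g$, and the vectors $h^{(r,k)}_{pq}$ are orthonormal, we get
\[
\{d^{(k)}_{rs},(d^{(k')}_{pq})^*\}=\tfrac14\{b_1+ib_2,\,b_1-ib_2\}=\tfrac14(2+2)=1
\]
when the triples coincide, with the cross terms canceling. The anticommutator is $0$ when the triples differ. Thus $\{d^{(k)}_{rq},(d^{(k)}_{pq})^*\}=\delta_{rp}$. Therefore
\[
d^{(k)}_{rq}(d^{(k)}_{pq})^*\Omega_{J,M}=\delta_{rp}\Omega_{J,M}-(d^{(k)}_{pq})^*d^{(k)}_{rq}\Omega_{J,M}=\delta_{rp}\Omega_{J,M},
\]
where the last step uses (ii).
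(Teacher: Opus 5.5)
Your proposal is correct and takes essentially the paper's route. You reduce (ii) to the two-mode computation by commuting $d^{(k)}_{pq}$ past the even factors $1-iX_{k'p'q'}$, and you get (iii) from $\{d^{(k)}_{rq},(d^{(k)}_{pq})^*\}=\delta_{rp}$ together with (ii); the only cosmetic difference is in (i), where you expand into $2^{MN^2}$ orthonormal wedge monomials instead of factoring $\Omega_{J,M}$ as a graded tensor product of the unit vectors $\Omega_{J,p,q,k}$.
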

\begin{proof}
Let \[\Omega_{J, p,q,k}= \frac{1}{\sqrt{2}}\left(
      1-i\,a(h^{(1,k)}_{pq})^*a(h^{(2,k)}_{pq})^*
    \right)\Omega = \frac{1}{\sqrt{2}}\left(\Omega - i h^{(1,k)}_{pq}\wedge h^{(2,k)}_{pq}\right).\] It is clear that 
\(\Omega_{J, p,q,k}\) is a unit vector, and is an even-parity vector in the 
Fock space. 
Under the canonical graded tensor product decomposition of the Fock space
over the orthogonal direct sum of the two-dimensional spaces
\[
\operatorname{span}\{h_{pq}^{(1,k)},h_{pq}^{(2,k)}\},
\]
the vector \(\Omega_{J,M}\) is the tensor product of the unit even-parity vectors
\(\Omega_{J,p,q,k}\).  Since the tensor products of orthogonal unit vectors are unit vectors, we obtain (i).

For part (ii), note that $d^{(k)}_{pq}$ commutes with $1-i\,a(h^{(1,k')}_{p'q'})^*a(h^{(2,k')}_{p'q'})^*$ unless $p=p'$, $q=q'$, $k=k'$.  It follows that 
\[d^{(k)}_{pq}\Omega_{J,M} = \left(d^{(k)}_{pq} \Omega_{J,p,q,k}\right) 
\wedge \left(\bigwedge_{(p',q',k')\neq (p,q,k)} 
\Omega_{J,p',q',k'}\right), \] where the order of the antisymmetric tensor products is again irrelevant since all but one of the terms have even parity.

But
\begin{eqnarray*}
    2\sqrt{2} d^{(k)}_{pq} \Omega_{J,p,q,k} &=& 
    \bigl(b(h^{(1,k)}_{pq})+i b(h^{(2,k)}_{pq})\bigr) \bigl(\Omega - i h^{(1,k)}_{pq}\wedge h^{(2,k)}_{pq}\bigr) \\ &= & 
    h^{(1,k)}_{pq} + i h^{(2,k)}_{pq} \\
    && - h_{pq}^{(1,k)}\wedge h^{(1,k)}_{pq}\wedge h^{(2,k)}_{pq} - i h_{pq}^{(2,k)} \\
    &&  - i h_{pq}^{(2,k)} \wedge h^{(1,k)}_{pq}\wedge h^{(2,k)}_{pq}
     - h_{pq}^{(1,k)}  \\ &=& 0.
\end{eqnarray*} Therefore, (ii) holds. 

For part (iii), note that 
\[
d_{pq}^{(k)}
=
\frac12\Big(
a(h^{(1,k)}_{pq})
+a(h^{(1,k)}_{pq})^*
+i\,a(h^{(2,k)}_{pq})
+i\,a(h^{(2,k)}_{pq})^*
\Big).
\] 

We claim that 
\begin{equation} \label{eqn:CARd}
\{(d^{(k)}_{rq}), (d^{(k)}_{pq})^*\} = \delta_{rp} 1.
\end{equation}

If $r\neq p$, the operators $d_{rq}^{(k)}$ and $(d_{pq}^{(k)})^*$ anti-commute, since the corresponding expressions in the creation/annihilation operators $a^*$ and $a$ anti-commute, and thus \eqref{eqn:CARd} holds true. 

If $r=p$, using CAR for the operators $a$ gives:
\[\{ a(h_{pq}^{(\alpha,k)}), a(h_{pq}^{(\alpha',k)})\} = 0,\qquad \{ a(h_{pq}^{(\alpha,k)}), a(h_{pq}^{(\alpha',k)})^*\} = \delta_{\alpha\alpha'}1,\]
Thus
\begin{eqnarray*}
    \{(d^{(k)}_{pq}), (d^{(k)}_{pq})^*\} & = & 
    \frac{1}{4}\{a(h^{(1,k)}_{pq}) +a(h^{(1,k)}_{pq})^*
+i\,a(h^{(2,k)}_{pq})
+i\,a(h^{(2,k)}_{pq})^*    
 , \\ && \qquad 
a(h^{(1,k)}_{pq})^*
+a(h^{(1,k)}_{pq})
-i\,a(h^{(2,k)}_{pq})^*
-i\,a(h^{(2,k)}_{pq})
\} \\ &= & \frac{1}{4}(1 + 1 - (i \cdot i) - (i\cdot i)) = 1.
\end{eqnarray*}
This proves \eqref{eqn:CARd} in the remaining case.  

Rewriting \eqref{eqn:CARd} gives $d^{(k)}_{rq}(d^{(k)}_{pq})^* = \delta_{rp}1 - (d^{(k)}_{pq})^* d^{(k)}_{rq}
$, and so 
 \[d^{(k)}_{rq}(d^{(k)}_{pq})^*\Omega_{J,M} = 
\bigl(\delta_{rp}1 - (d^{(k)}_{pq})^* d^{(k)}_{rq}\bigr)\Omega_{J,M} = \delta_{rp} \Omega_{J,M}\] as claimed. 
\end{proof}

Let now 
\[
\eta_0 = \frac{1}{\sqrt{N}} \sum_{p=1}^N e_p \otimes \Omega_{J,M}
\] where $e_p$ denotes the column vector all of whose entries are zero, except that the $p$-th entry is $1$. 

\begin{lemma} \label{lemma:QOnEta} For each $k$,
    \[(Q^{(k)}_N)^* Q^{(k)}_N\eta_0 = 0,  \qquad Q^{(k)}_N (Q^{(k)}_N)^* \eta_0 = 4 \eta_0.\]
\end{lemma}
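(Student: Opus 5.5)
The plan is a direct computation on the vector $\eta_0$, using only the matrix-unit algebra $e_{pq}e_s=\delta_{qs}e_p$ together with Lemma~\ref{lemma:propertiesOfOmega}. Throughout we use
\[
Q^{(k)}_N=\frac{2}{\sqrt N}\sum_{p,q}e_{pq}\otimes d^{(k)}_{pq},
\qquad
(Q^{(k)}_N)^*=\frac{2}{\sqrt N}\sum_{p,q}e_{qp}\otimes (d^{(k)}_{pq})^*,
\]
and we fix $k$, dropping the superscript.

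\textbf{First identity.} Apply $Q_N$ to $\eta_0=N^{-1/2}\sum_s e_s\otimes\Omega_{J,M}$. Since $e_{pq}e_s=\delta_{qs}e_p$,
\[
Q_N\eta_0=\frac{2}{N}\sum_{p,q}e_p\otimes d_{pq}\Omega_{J,M}.
\]
By Lemma~\ref{lemma:propertiesOfOmega}(ii) every term vanishes, so $Q_N\eta_0=0$. Hence $Q_N^*Q_N\eta_0=0$.

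\textbf{Second identity.} First compute
\[
Q_N^*\eta_0=\frac{2}{N}\sum_{p,q}e_q\otimes d_{pq}^*\Omega_{J,M}.
\]
Applying $Q_N=\frac{2}{\sqrt N}\sum_{r,q'}e_{rq'}\otimes d_{rq'}$ forces $q'=q$, which gives
\[
Q_NQ_N^*\eta_0=\frac{4}{N^{3/2}}\sum_{r,p,q}e_r\otimes d_{rq}d_{pq}^*\Omega_{J,M}.
\]
Lemma~\ref{lemma:propertiesOfOmega}(iii) gives $d_{rq}d_{pq}^*\Omega_{J,M}=\delta_{rp}\Omega_{J,M}$. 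This collapses the sum over $p$. The remaining sum over $q$ contributes a factor $N$, so
\[
Q_NQ_N^*\eta_0=\frac{4}{\sqrt N}\sum_r e_r\otimes\Omega_{J,M}=4\eta_0.
\]

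There is no real obstacle here. All the nontrivial content (the vacuum property (ii) and the CAR relation (iii) for the $d_{pq}$ with the same column index $q$) is already established in Lemma~\ref{lemma:propertiesOfOmega}. The only point needing care is the index bookkeeping: one must check that in $Q_NQ_N^*$ the two $d$'s share the column index $q$, which is exactly the form in which (iii) applies.
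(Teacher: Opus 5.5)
Your proposal is correct and follows the paper's proof essentially line by line. You obtain $Q_N^{(k)}\eta_0=0$ from Lemma~\ref{lemma:propertiesOfOmega}(ii), and you reduce $Q_N^{(k)}(Q_N^{(k)})^*\eta_0$ to $\frac{4}{N^{3/2}}\sum_{p,q,r}e_r\otimes d^{(k)}_{rq}(d^{(k)}_{pq})^*\Omega_{J,M}$ before collapsing the sum with part (iii), exactly as the paper does.
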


\begin{proof}
    Since $Q^{(k)}_N = 2 N^{-1/2} \sum_{p,q} e_{pq}\otimes d_{pq}^{(k)}$, we can apply Lemma~\ref{lemma:propertiesOfOmega} to deduce
    \[Q_N^{(k)} \eta_0 = \frac{2}{N}\sum_{pq} e_{p}\otimes  d_{pq}^{(k)} \Omega_{J,M} = 0  \] Thus also \( (Q^{(k)}_N)^* Q^{(k)}_N\eta_0 = 0\).

    On the other hand,
    \begin{eqnarray*}
        Q^{(k)}_N (Q^{(k)}_N)^* \eta_0 &=& 
        \frac{4}{N^{3/2}}\sum_{p,q,r} e_{r} \otimes d_{rq}^{(k)} (d_{pq}^{(k)})^* \Omega_{J,M} \\
        &=& \frac{4}{N^{3/2}}\sum_{p,q,r} e_{r} \otimes \delta_{rp}  \Omega_{J,M}  \\
        &=& \frac{4}{N^{3/2}}\sum_{q,r} e_{r} \otimes \Omega_{J,M}  
        =4\eta_0,
    \end{eqnarray*} where we used Lemma~\ref{lemma:propertiesOfOmega} to pass from the first to the second line.
\end{proof}

\subsection{Norms of sums of squares.}
Let $$R_N = \begin{bmatrix} Q_N^{(1)} & Q_N^{(2)} & \cdots & Q_N^{(M)} \\
0 & 0 & \cdots & 0 \\
\vdots & \vdots  & & \vdots \\
0 & 0 & \cdots & 0 \end{bmatrix} \in M_{M\times M}(\mathbb{C})\otimes \Cl(W,B). $$  Then $$
R_N R_N^*  = 1\otimes \sum_{j=1}^M Q_N^{(j)} (Q_N^{(j)})^*,$$ and so using Lemma~\ref{lemma:QOnEta}, $$\Vert R_N R_N^*\Vert \geq \frac{1}{\Vert \eta_0 \Vert} \sum_{j=1}^M \Vert Q_N^{(j)} (Q_N^{(j)})^* \eta_0 \Vert = \frac{4M\Vert\eta_0\Vert}{\Vert\eta_0\Vert}=4M.
$$
It follows that $$\liminf_{N\to\infty} \Vert R_N \Vert \geq 2\sqrt{M}. $$

On the other hand, let $A^{(1)},B^{(1)},\dots,A^{(M)},B^{(mM}$ be free semicircular elements, and let $Q^{(j)}=A^{(j)} + i B^{(j)}$.  be  circular elements (these are $\sqrt{2}$ times the usual normalization of a circular element).  Let $$
R = \begin{bmatrix} Q^{(1)} & Q^{(2)} & \cdots & Q^{(M)} \\
0 & 0 & \cdots & 0 \\
\vdots & \vdots  & & \vdots \\
0 & 0 & \cdots & 0 \end{bmatrix} \in M_{M\times M}(\mathbb{C})\otimes C^*(Q^{(1)},\dots,Q^{(M)}).
$$
Then $$RR^* = \sum_{j=1}^M Q^{(j)} (Q^{(j)})^*$$ is an $M$-fold sum of $\lambda=1$ free Poisson elements $Q^{(j)}(Q^{(j)})^*$ each having a law with support $[0,8]$ and density $$
\frac{1}{4\pi x}\sqrt{(8-x)x}\,
\mathbf 1_{[0,8]}(x).
$$  
Thus $RR^*$ is a $\lambda=M$  free Poisson element having its law supported on the interval 
$[2(\sqrt{N}-1)^2,2(\sqrt{M}+1)^2]$. 
In particular $$\Vert R\Vert = \sqrt{2}(\sqrt{M}+1).$$

Noting that if $M\geq 6$, then $2\sqrt{M} > \sqrt{2}(\sqrt{M}+1)$, we can record the above computations as the following: 
\begin{lemma}
    \label{lemma:WrongOperatorSpaceStructure}
    With the above notation, 
    \begin{eqnarray} && \Vert R \Vert = \Big\Vert \sum_{j=1}^M Q^{(j)} (Q^{(j)})^* \Big\Vert^{1/2} = \sqrt{2}(\sqrt{M}+1),\\ && \liminf_{N\to\infty}  \Vert R_N \Vert = 
    \liminf_{N\to\infty} \Big\Vert \sum_{j=1}^M Q_N^{(j)} (Q_N^{(j)})^*\Big\Vert^{1/2} \geq 2\sqrt{M}.\end{eqnarray} xw
    In particular, if $M\geq 6$, then $2\sqrt{M}>\sqrt{2}(\sqrt{M}+1)$ and so \begin{eqnarray}
        &&\liminf_{N\to\infty} \Vert R_N\Vert \neq \Vert R \Vert,\label{eq:noOpConv}\\
        &&\liminf_{N\to\infty}\Big\Vert \sum_{j=1}^M Q_N^{(j)} (Q_N^{(j)})^*\Big\Vert \neq \Big\Vert \sum_{j=1}^M Q^{(j)} (Q^{(j)})^* \Big\Vert.\label{eq:noStrongConv}
    \end{eqnarray} 
\end{lemma}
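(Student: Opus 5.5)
The lemma collects the computations just made, so the plan is to assemble them in three steps: the free side, the matrix side, and the numerical comparison.

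\emph{The free side.} First, $R R^* = e_{11}\otimes \sum_j Q^{(j)}(Q^{(j)})^*$, since $R$ has nonzero entries only in its first row. Hence $\Vert R\Vert^2 = \Vert RR^*\Vert = \Vert\sum_j Q^{(j)}(Q^{(j)})^*\Vert$.

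Each $Q^{(j)} = A^{(j)}+iB^{(j)}$ is circular with $\tau(Q^{(j)}(Q^{(j)})^*) = 2$. So $Q^{(j)}(Q^{(j)})^*$ is $2$ times a free Poisson element of rate $1$. Since $A^{(j)},B^{(j)}$ form a free semicircular family, the $*$-free circular elements $Q^{(1)},\dots,Q^{(M)}$ can be realized as $Q^{(j)} = \sqrt2\, c_j$. The free additive convolution of $M$ rate-$1$ free Poisson laws is the rate-$M$ free Poisson law. The cleanest route is to observe that the row $[c_1,\dots,c_M]$ is an $R$-diagonal/Wishart-type element, so $\sum_j c_jc_j^*$ has the free Poisson law with rate $M$. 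Its support is $[(\sqrt M-1)^2,(\sqrt M+1)^2]$, and multiplying by $2$ and taking square roots gives $\Vert R\Vert = \sqrt2(\sqrt M+1)$. Note that the paper's displayed interval contains a typo, $N$ in place of $M$, which is irrelevant here.

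\emph{The matrix side.} Similarly, $R_NR_N^* = e_{11}\otimes \sum_j Q_N^{(j)}(Q_N^{(j)})^*$, which identifies the two $\liminf$'s. For the lower bound, use Lemma~\ref{lemma:QOnEta}: each $Q_N^{(j)}(Q_N^{(j)})^*$ fixes $\eta_0$ up to the factor $4$. Therefore $\sum_j Q_N^{(j)}(Q_N^{(j)})^*\eta_0 = 4M\eta_0$. The vector $\eta_0$ is a unit vector by Lemma~\ref{lemma:propertiesOfOmega}(i), since it is an average of $N$ orthogonal copies. So the norm is at least $4M$ for every $N$, giving $\liminf\ge 2\sqrt M$.

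Here I would clean up the paper's argument. It sums norms of individual terms, which is not literally a valid lower bound. What should be used instead is that $\eta_0$ is a common eigenvector, so the eigenvalues add.

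\emph{The comparison.} Finally, compare $2\sqrt M$ with $\sqrt2(\sqrt M+1)$. The inequality $2\sqrt M>\sqrt2\sqrt M+\sqrt2$ is equivalent to $\sqrt M(2-\sqrt2)>\sqrt2$, that is, $\sqrt M>\sqrt2/(2-\sqrt2)=\sqrt2+1$, that is, $M>3+2\sqrt2\approx5.83$. This holds for $M\ge6$ and yields \eqref{eq:noOpConv} and \eqref{eq:noStrongConv}.

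The only step with real content is identifying the law of $\sum_j Q^{(j)}(Q^{(j)})^*$. If one prefers to avoid citing the free Poisson semigroup, one can argue via $R$-transforms: each summand has $R$-transform $2/(1-2z)$, and these add to $2M/(1-2z)$. One then computes the right edge of the support directly.
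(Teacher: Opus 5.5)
Your proposal is correct and follows the paper's argument. The lower bound uses the vector $\eta_0$ of Lemma~\ref{lemma:QOnEta}, the free side identifies $\sum_j Q^{(j)}(Q^{(j)})^*$ as twice a rate-$M$ free Poisson element, and $M>3+2\sqrt2$ explains the threshold $M\ge 6$. Your tightenings of the paper's written justification are accurate: $R_NR_N^*=e_{11}\otimes\sum_j Q_N^{(j)}(Q_N^{(j)})^*$, and the bound $4M$ holds because $\eta_0$ is a common eigenvector with eigenvalue $4$ for every summand, not because one may sum norms.
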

We record an immediate corollary: 

\begin{theorem}
   \label{thrm:noStrong}
    Let $M\geq 6$.  Then: 
    \begin{itemize}
    \item[(i)] the asymptotic operator space structure of  $$\operatorname{span}_\mathbb{C}(A_N^{(1)},B_N^{(1)},\dots,A_N^{(M)},B_N^{(M)})$$  is different from that of the complex span of $2M$ free sesmicircular variables.  
    \item[(ii)] The family $(A_N^{(1)},B_N^{(1)},\dots,A_N^{(M)},B_N^{(M)})$ converges to the free semicircular family $(A^{(1)},B^{(1)},\dots,A^{(M)},B^{(M)})$ in law but not strongly. In fact, strong convergence already fails for the quadratic polynomial $$P(a_1,b_1,\dots,a_m,b_m)=\sum_{j=1}^M  (a_j + i b_j)(a_j + b_j)^*$$
    \end{itemize}
\end{theorem}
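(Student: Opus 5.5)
The plan is to derive both parts directly from Lemma~\ref{lemma:WrongOperatorSpaceStructure}, using that $R_N$ and $R$ are the same linear-algebraic expression evaluated in two settings.

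For (i), recall that the operator space structure of a subspace $E$ is given by the norms $\Vert \sum_i x_i\otimes v_i\Vert_{M_M\otimes E}$ for all matrix coefficients $x_i\in M_{M}(\mathbb{C})$. Consider the linear isomorphism $u_N$ from the complex span of the free semicircular family to $\operatorname{span}_\mathbb{C}(A_N^{(j)},B_N^{(j)})$ sending $A^{(j)}\mapsto A_N^{(j)}$ and $B^{(j)}\mapsto B_N^{(j)}$. The element $R=\sum_j e_{1j}\otimes (A^{(j)}+iB^{(j)})\in M_M\otimes \operatorname{span}$ is mapped by $\mathrm{id}\otimes u_N$ to $R_N$. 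If the operator space structures converged asymptotically, in the sense that $\Vert \mathrm{id}_{M_M}\otimes u_N\Vert\to 1$ and $\Vert \mathrm{id}_{M_M}\otimes u_N^{-1}\Vert\to 1$ (or, equivalently, that the ultraproduct span $W^{\mathbb{C}}_\infty$ is completely isometric to the free span via $u$), we would get $\Vert R_N\Vert\to\Vert R\Vert$. This contradicts \eqref{eq:noOpConv}, since $\liminf\Vert R_N\Vert\ge 2\sqrt M>\sqrt2(\sqrt M+1)=\Vert R\Vert$ for $M\ge 6$. In fact it shows that $u$ fails to be a complete isometry already at the matrix level $M_M$. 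I will phrase this via the ultraproduct $W^\mathbb{C}_\infty$ as in Table~\ref{tbl:asymptoticComparison}: the map $A^{(j)}\mapsto (A^{(j)}_N)_\omega$, $B^{(j)}\mapsto (B_N^{(j)})_\omega$ is not completely isometric for any free ultrafilter $\omega$, because the $\liminf$ bound applies along every ultrafilter.

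For (ii), convergence in law is Voiculescu's result quoted in \S\ref{sect:CARMatrices}. Strong convergence would require $\Vert P(A_N,B_N)\Vert\to\Vert P(A,B)\Vert$ for every noncommutative polynomial $P$. Take $P=\sum_j (a_j+ib_j)(a_j+ib_j)^*$; here the statement's $(a_j+b_j)^*$ should read $(a_j+ib_j)^*$, with adjoint taken formally, that is $(a_j+ib_j)(a_j-ib_j)$. Evaluating gives $P(A_N,B_N)=\sum_j Q_N^{(j)}(Q_N^{(j)})^*$ and $P(A,B)=\sum_j Q^{(j)}(Q^{(j)})^*$, and \eqref{eq:noStrongConv} then shows that the norms do not converge.

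The only real content is already in Lemma~\ref{lemma:WrongOperatorSpaceStructure}. The main point needing care is the free Poisson norm computation $\Vert\sum Q^{(j)}Q^{(j)*}\Vert=2(\sqrt M+1)^2$. This uses the fact that $\sum_j Q^{(j)}Q^{(j)*}$ has the same law as $cc^*$ for $c$ a rectangular ($R$-diagonal) compression, which I would justify by noting that $R$ is, up to scaling, a circular operator-valued element whose $RR^*$ has a free Poisson law with rate $M$. Once that is granted, the corollary follows immediately.
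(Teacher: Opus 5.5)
Your proposal is correct and follows the paper's proof: (i) comes from writing $R_N=\sum_j e_{1j}\otimes A_N^{(j)}+\sum_j (ie_{1j})\otimes B_N^{(j)}$, i.e.\ the image of $R$ under $\mathrm{id}_{M_M}\otimes u_N$, and invoking \eqref{eq:noOpConv}, while (ii) comes from evaluating $P$ (rightly read as $\sum_j (a_j+ib_j)(a_j+ib_j)^*$) and invoking \eqref{eq:noStrongConv}. Two minor remarks: for (i) the condition $\limsup_N\Vert\mathrm{id}_{M_M}\otimes u_N\Vert\le 1$ alone already gives the contradiction, and the free Poisson norm does not need your operator-valued detour, since the $Q^{(j)}(Q^{(j)})^*$ are free and free Poisson laws add in the rate under free additive convolution.
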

\begin{proof}
    The first part follows by  noting that $$R_N =  \sum_{j=1}^M e_{1j} \otimes A_N^{(j)} + \sum_{j=1}^M (i e_{1j})\otimes B_N^{(j)}$$ and by applying \eqref{eq:noOpConv}.

    The second part follows by evaluating $P$ and applying \eqref{eq:noStrongConv}.
\end{proof}

\subsection{Norms of commutators}

\begin{lemma} \label{lemma:normOfCommutator}
    Let \(C_N^{(k)} = i [ A_N^{(k)} , B_N^{(k)}]\). 
                  Then \(
                  \Vert \sum_{k=1}^M C_N^{(k)}\Vert \geq 2M
                  \).
\end{lemma}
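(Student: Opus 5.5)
The approach is to express each commutator $C_N^{(k)}$ in terms of $Q_N^{(k)}$ and $(Q_N^{(k)})^*$, and then test the sum against the vector $\eta_0$. Lemma~\ref{lemma:QOnEta} already computes both $Q^{(k)}_N(Q^{(k)}_N)^*\eta_0$ and $(Q^{(k)}_N)^*Q^{(k)}_N\eta_0$ exactly, so $\eta_0$ should turn out to be an eigenvector of $\sum_k C_N^{(k)}$.

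\textbf{Step 1: rewrite the commutator.} Write $Q=Q_N^{(k)}$, $A=A_N^{(k)}=(Q+Q^*)/2$ and $B=B_N^{(k)}=(Q-Q^*)/(2i)$. Expanding the products gives
\[
AB-BA=\frac{1}{4i}\Bigl[(Q+Q^*)(Q-Q^*)-(Q-Q^*)(Q+Q^*)\Bigr].
\]
In the bracket the terms $Q^2$ and $(Q^*)^2$ cancel, leaving $2Q^*Q-2QQ^*$. Hence
\[
C_N^{(k)}=i[A,B]=\tfrac12\bigl((Q_N^{(k)})^*Q_N^{(k)}-Q_N^{(k)}(Q_N^{(k)})^*\bigr).
\]
This operator is self-adjoint.

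\textbf{Step 2: evaluate on $\eta_0$.} By Lemma~\ref{lemma:QOnEta}, $(Q_N^{(k)})^*Q_N^{(k)}\eta_0=0$ and $Q_N^{(k)}(Q_N^{(k)})^*\eta_0=4\eta_0$ for every $k$. Therefore $C_N^{(k)}\eta_0=-2\eta_0$ for each $k$. Summing over $k$ gives
\[
\sum_{k=1}^M C_N^{(k)}\eta_0=-2M\eta_0.
\]
A single vector works for all $k$ simultaneously because $\Omega_{J,M}$ is built from all $M$ families at once; this is exactly what Lemma~\ref{lemma:propertiesOfOmega} provides.

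\textbf{Step 3: conclude.} The vector $\eta_0$ is nonzero; in fact it is a unit vector, since $\|\eta_0\|^2=\frac1N\sum_{p=1}^N\|\Omega_{J,M}\|^2=1$ by Lemma~\ref{lemma:propertiesOfOmega}(i). Since $-2M$ is then an eigenvalue of the self-adjoint operator $\sum_k C_N^{(k)}$, we get $\bigl\|\sum_k C_N^{(k)}\bigr\|\ge 2M$.

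The only real work is the algebra in Step~1, in particular keeping track of the factors of $i$ and of the overall sign. An error there would at worst flip the eigenvalue from $-2M$ to $2M$, which does not affect the norm bound. All of the substantive input is already contained in Lemma~\ref{lemma:QOnEta}.
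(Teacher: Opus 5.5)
Your proposal is correct and follows the paper's proof essentially verbatim. Like the paper, you rewrite $C_N^{(k)}=\frac12\bigl((Q_N^{(k)})^*Q_N^{(k)}-Q_N^{(k)}(Q_N^{(k)})^*\bigr)$, apply Lemma~\ref{lemma:QOnEta} to get $\sum_k C_N^{(k)}\eta_0=-2M\eta_0$, and read off the norm bound; your explicit check that $\|\eta_0\|=1$ is a harmless addition, and self-adjointness is not actually needed, since $\|T\eta_0\|=2M\|\eta_0\|$ already bounds the norm.
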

\begin{proof}
Note that since $ A_N^{(k)} = \frac{1}{2}(Q_{N}^{(k)} + (Q_N^{(k)})^*)$
and $B_N^{(k)} = \frac{1}{2i}(Q_{N}^{(k)} - (Q_N^{(k)})^*)$, 
\[
C_N^{(k)} = i [ A_N^{(k)} , B_N^{(k)} ] 
= \frac12\left((Q_N^{(k)})^*Q_N^{(k)}
                  -Q_N^{(k)}(Q_N^{(k)})^*\right).
\]
        
Therefore by Lemma~\ref{lemma:QOnEta}, for any $k=1,\dots,M$,
\[
C^{(k)}_N \eta_0 = \frac12\left((Q_N^{(k)})^*Q_N^{(k)}
                  -Q_N^{(k)}(Q_N^{(k)})^*\right) \eta_0 = -2\eta_0. 
\]
It follows that
\(
 \sum_{k=1}^M C^{(k)}_N \eta_0 = -2M \eta_0
\) so that 
\[
\left\Vert \sum_{k=1}^M C_N^{(k)} \right\Vert \geq 2M,
\] as claimed. \end{proof}

\begin{lemma}\label{lemma:freeNorms}
    Let $A^{(k)}, B^{(k)}$, $k=1,\dots,M$ be free semicircular elements of variance $1$, and let $C^{(k)} = i [A^{(k)},B^{(k)}]$.  Then $\Vert \sum_{k=1}^M C^{(k)} \Vert \leq 8  + 2\sqrt{2M}$.
\end{lemma}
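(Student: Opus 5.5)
The plan is to use the free independence across $k$ together with a Voiculescu/Haagerup-type norm estimate for sums of free centered variables. The estimate to invoke (or reprove) is the following: if $x_1,\dots,x_M$ are freely independent, self-adjoint and centered with respect to a faithful tracial state $\tau$, then
\[
\Big\Vert \sum_{k=1}^M x_k\Big\Vert \le \max_k \Vert x_k\Vert + 2\Big(\sum_{k=1}^M \tau(x_k^2)\Big)^{1/2}.
\]
The operators $C^{(k)}=i[A^{(k)},B^{(k)}]$ lie in the mutually free algebras $W^*(A^{(k)},B^{(k)})$, so they are freely independent. They are self-adjoint because $[A,B]^*=-[A,B]$. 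They are centered because $\tau(AB)=\tau(BA)$. So the estimate applies.

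The two inputs are elementary.

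\emph{Uniform norm bound.} Each semicircular element of variance $1$ has norm $2$. Hence $\Vert C^{(k)}\Vert\le 2\Vert A^{(k)}\Vert\Vert B^{(k)}\Vert = 8$.

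\emph{Second moment.} Expanding the square,
\[
\tau\bigl((C^{(k)})^2\bigr) = -\tau\bigl((AB-BA)^2\bigr) = -2\tau(ABAB)+2\tau(A^2B^2).
\]
Freeness of the centered pair $A,B$ gives $\tau(ABAB)=0$ and $\tau(A^2B^2)=\tau(A^2)\tau(B^2)=1$. Thus $\tau\bigl((C^{(k)})^2\bigr)=2$, and $\bigl(\sum_k\tau((C^{(k)})^2)\bigr)^{1/2}=\sqrt{2M}$. Substituting both inputs gives $8+2\sqrt{2M}$.

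The main step, which is the only one needing care, is the norm estimate itself; I would prove it by Voiculescu's argument. Realize the $x_k$ on the reduced free product Hilbert space
\[
\mathcal H = \mathbb C\xi\oplus\bigoplus_{n\ge1}\ \bigoplus_{k_1\ne\cdots\ne k_n}\mathring H_{k_1}\otimes\cdots\otimes\mathring H_{k_n},
\]
where $\mathring H_k = L^2(\mathcal A_k)\ominus\mathbb C\xi$. Split $x_k$ as $x_k=\ell_k+\ell_k^*+D_k$ according to how it acts on a tensor whose first letter is or is not from $\mathring H_k$.
\begin{itemize}
\item $\ell_k$ is the ``creation'' part. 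It sends a tensor not starting in $\mathring H_k$ to $(x_k\xi)\otimes(\cdot)$.
\item $\ell_k^*$ is its adjoint, the ``annihilation'' part.
\item $D_k$ is the part acting on the first letter within $\mathring H_k$ and staying in $\mathring H_k$. It is a compression of $x_k$, so $\Vert D_k\Vert\le\Vert x_k\Vert$.
\end{itemize}
The $D_k$ act on mutually orthogonal subspaces and have ranges in those same subspaces, so $\Vert\sum_k D_k\Vert\le\max_k\Vert x_k\Vert$. The ranges of the $\ell_k$ are mutually orthogonal, so $\Vert\sum_k\ell_k\Vert\le(\sum_k\Vert x_k\xi\Vert^2)^{1/2}=(\sum_k\tau(x_k^2))^{1/2}$; the same bound holds for $\sum_k\ell_k^*$ by taking adjoints. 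The triangle inequality then gives the estimate.

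The bookkeeping to check carefully is that $x_k$ maps $\xi\otimes\eta$ to $x_k\xi\otimes\eta$ plus a multiple of $\eta$. That scalar term is exactly the annihilation part, and one must confirm the decomposition covers all of $\mathcal H$ without double counting. Alternatively, one could quote the free Rosenthal/Khintchine inequality (Voiculescu; Junge--Parcet--Xu) directly.
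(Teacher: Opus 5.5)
Your proposal is correct and follows the paper's proof: you use the same two inputs, $\Vert C^{(k)}\Vert\le 8$ and $\tau\bigl((C^{(k)})^2\bigr)=2$, and feed them into the same Voiculescu norm inequality for free, centered, self-adjoint variables. The only difference is that you also sketch a proof of that inequality, via the creation/annihilation/diagonal splitting on the reduced free product space, which is sound, whereas the paper simply cites it.
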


\begin{proof}
    Since $A^{(k)}, B^{(k)}$ are free semicircular elements, $\Vert A^{(k)}\Vert = \Vert B^{(k)}\Vert = 2$, and thus $\Vert C^{(k)}\Vert \leq 2 \Vert A^{(k)}\Vert \Vert B^{(k)}\Vert = 8$. A direct computation   shows that $\Vert C^{(k)}\Vert_{L^2}^2 = \tau(-A^{(k)}B^{(k)}A^{(k)}B^{(k)}-B^{(k)}A^{(k)}B^{(k)}A^{(k)}+A^{(k)}B^{(k)}B^{(k)}A^{(k)}+B^{(k)}A^{(k)}A^{(k)}B^{(k)})= 2$.

    Since $C^{(1)},\dots,C^{(M)}$ are freely independent and centered, it follows from Voiculescu's inequality for norms of freely independent centered variables (see \cite[Lemma 3.2]{Voiculescu:Norm}) that 
    \[ \left\Vert \sum_{k=1}^M C^{(k)} \right\Vert   \leq \max_{k} \Vert C^{(k)} \Vert + 2\left(\sum_{k=1}^M \Vert C^{(k)}\Vert_{L^2}^2  \right)^{1/2}
     \leq 8  + 2\sqrt{2M},\] 
    as claimed. 
\end{proof}

In this way we again obtain a version of~\ref{thrm:noStrong}:

\begin{theorem}\label{thrm:noStrongWeaker}
Let $M\geq 9$. Then the family $(A_N^{(1)},B_N^{(1)},\dots,A_N^{(M)},B_N^{(M)})$ converges to the free semicircular family $(A^{(1)},B^{(1)},\dots,A^{(M)},B^{(M)})$ in law but not strongly.  In fact,
\[
\liminf_{N\to \infty } \left\Vert \sum_{k=1}^M i[A^{(k)}_N, B^{(k)}_N] \right\Vert \geq 2M > 8 + 2\sqrt{2M} \geq \left\Vert \sum_{k=1}^M i[A^{(k)}, B^{(k)}]\right\Vert.
\]
\end{theorem}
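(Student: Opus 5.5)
The argument is short and combines Lemma~\ref{lemma:normOfCommutator} with Lemma~\ref{lemma:freeNorms}. Convergence in law is Voiculescu's result \cite{Voiculescu:Random}, recalled at the end of \S\ref{sect:CARMatrices}, so only the failure of strong convergence needs an argument.

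First, Lemma~\ref{lemma:normOfCommutator} holds for every $N$. Indeed, the vector $\eta_0$ is an eigenvector of $\sum_k i[A_N^{(k)},B_N^{(k)}]$ with eigenvalue $-2M$, uniformly in $N$. This gives
\[
\liminf_{N\to\infty}\Big\Vert \sum_{k=1}^M i[A_N^{(k)},B_N^{(k)}]\Big\Vert \geq 2M .
\]
Second, Lemma~\ref{lemma:freeNorms} gives the upper bound $8+2\sqrt{2M}$ for the norm of the same polynomial evaluated in the free semicircular family. Here it matters that the $C^{(k)}$ are freely independent, since each is a polynomial in the pair $(A^{(k)},B^{(k)})$. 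They are also centered, since $\tau(AB)=\tau(BA)$.

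The remaining step is the elementary inequality $2M>8+2\sqrt{2M}$ for $M\geq 9$. Put $x=\sqrt{2M}$. The inequality becomes $x^2-2x-8>0$, which factors as $(x-4)(x+2)>0$, and this holds exactly when $x>4$, i.e. $M>8$. Since $M$ is an integer, $M\geq 9$ suffices; for example, $M=9$ gives $18>8+6\sqrt2\approx 16.49$.

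Chaining the three inequalities shows that the norms of the self-adjoint degree-two polynomial $P=\sum_k i[a_k,b_k]$ evaluated at the matrices do not converge to its norm at the free limit. This contradicts the definition of strong convergence. There is no real obstacle here: the substantive work is already contained in the two lemmas, and the only step needing care is the arithmetic threshold above.
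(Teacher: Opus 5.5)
Your proof is correct and follows the paper's argument: the eigenvector $\eta_0$ of Lemma~\ref{lemma:normOfCommutator} gives the lower bound $2M$ for every $N$, Lemma~\ref{lemma:freeNorms} gives the free upper bound $8+2\sqrt{2M}$, and your factorization $(x-4)(x+2)>0$ with $x=\sqrt{2M}$ shows these separate exactly when $M\geq 9$. The only difference is cosmetic: you attribute convergence in law to Voiculescu's result recalled in \S\ref{sect:CARMatrices}, whereas the paper's proof cites its reference on matrices with creation-operator entries, and either citation suffices.
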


\begin{proof}
The stated inequalities are direct consequences of Lemma~\ref{lemma:normOfCommutator}, the fact that $2M > 8 + 2\sqrt{2M}$ for $M\geq 9$, and Lemma~\ref{lemma:freeNorms}.  Convergence in law holds because of \cite{Shlyakhtenko:Matrices}.  Strong convergence would imply that for any non-commutative polynomial $P$, 
\[\lim_{N\to\infty} \left\Vert P(A_N^{(1)},B_N^{(1)},\dots,A_N^{(M)},B_N^{(M)})\right\Vert = 
\left\Vert P(A^{(1)},B^{(1)},\dots,A^{(M)},B^{(M)})\right\Vert, \] which fails with 
$P(a_1,b_1,\dots,a_M,b_M)=\sum_{k=1}^M i[a_k,b_k]$.
\end{proof}

\subsection{Further remarks}\label{sec:OtherOmeags}
It may be tempting to ask whether it is possible to ``remove'' the extraneous eigenvectors of the commutators $C^{(k)}_N$ and deduce convergence on a suitable subspace (by analogy with strong convergence of permutation matrices, where one needs to restrict to the orthocomplement of the all-ones vector). To this end, we note that there are further counterexamples that can be constructed, in the following way.

Fix $\epsilon : \{1,\dots,M\} \to \{\pm 1\}$, and let\[
    \Omega_{J,M,\epsilon}
    =
    2^{-MN^2/2}
    \prod_{k=1}^M\prod_{p,q=1}^N
    \left(
      1-i\epsilon(k)\,a(h^{(1,k)}_{pq})^*a(h^{(2,k)}_{pq})^*
    \right)\Omega,
\]
(in this way our previous construction corresponds to $\epsilon$ being identically equal to $1$).  Furthermore, fix any unitary $u$ in the commutant of the $C^*$-algebra $C^*(d^{(k)}_{pq}):1\leq k\leq M, 1\leq p,q\leq N)$ viewed as an algebra of operators on $\mathscr{F}$, and let $w\in \mathbb{C}^N$ be an arbitrary unit vector.  Set 
$$
\eta = w\otimes (u\Omega_{J,M,\epsilon}).$$ Then a similar computation shows that 
$C_N^{(k)} \eta = -2 \epsilon(k)\eta$ so that $$\left\Vert\sum_{k=1}^M \epsilon(k) C_N^{(k)}\right\Vert \geq 2M,$$  while the norm of the operator $\sum \epsilon(k)C^{(k)}$ does not depend on $\epsilon$. In other words, strong convergence cannot happen after ``removal'' of just a few ``bad vectors''.

\section{The $q$-Gaussian case, $-1<q<1$: Strong Convergence.}
Bo\.zejko and Speicher constructed a one-parameter family of $q$-deformed creation/annihilation operators \cite{Bozejko-Speicher:qBrownian}. 
In this section, we consider the $q$-deformed analogs of the CAR matrices we considered above.  

We briefly recall the construction to set the notation.  Let $H_\mathbb{C}$ be a complex Hilbert space, and consider the algebraic tensor product 
\[
\mathscr{F}_q^{\operatorname{alg}} = \mathbb{C}\Omega \oplus \bigoplus_{k\geq 1} 
(H_\mathbb{C})^{\atimes k} 
\]
where we use $\atimes$ to denote the algebraic tensor product.  For a fixed number $-1\leq q\leq 1$, define a scalar product on $\mathscr{F}_q^{\operatorname{alg}}$ by \[
\langle \xi_1\atimes \cdots \atimes \xi_k ,\zeta_1\atimes \cdots \atimes \zeta_m\rangle 
= \delta_{km} \sum_{\pi \in \mathbb{S}_k} (q)^{\operatorname{inv}(\pi)}  \prod_{j=1}^k \langle \xi_j, \zeta_{\pi(j)}\rangle,
\]
where $\mathbb{S}_k$ denotes the group of permutations of the set $\{1,\dots,k\}$, and \[
\operatorname{inv}(\pi)=\bigl|\{(i,j): 1\leq i< j\leq k \textrm{ but } \pi(i)>\pi(j)\}\bigr|\] denotes the number of inversions of a permutation $\pi$. 

By \cite[Proposition 1]{Bozejko-Speicher:qBrownian}, this sesquilinear scalar product is positive semi-definite for all values of $q\in [-1,1]$, and is positive definite the moment $|q|<1$.  (In the extreme cases $q=\pm 1$ one can quotient $\mathscr{F}_q^{\operatorname{alg}}$ by the kernel of the scalar product; in that case one obtains precisely the symmetric ($q=1$) or anti-symmetric ($q=-1$) Fock spaces).

We will denote by $\mathscr{F}_q$ the completion of $\mathscr{F}_q^{\operatorname{alg}}$ (modulo the kernel of the scalar product in cases $q=\pm 1$) with respect to the norm arising from this scalar product.

For $h\in H_\mathbb{C}$ denote by $\ell(h)$ the $q$-creation operator given on $\mathscr{F}_q^{\operatorname{alg}}$ by linearity and 
\[
\ell(h) \xi_1\atimes \cdots \atimes \xi_k = h\atimes \xi_1\atimes \cdots \atimes \xi_k.
\]
The operator $\ell(h)$ turns out to be bounded for all $-1\leq q<1$ and thus extends to all of $\mathscr{F}_q$; we abuse notation and denote it by the same symbol. Its adjoint is then given by the formula
\[
\ell(h)^* \xi_1\atimes\cdots\atimes \xi_k = \sum_{j=1}^{k} q^{j-1} \langle h,\xi_j\rangle \xi_1\atimes\cdots 
\atimes \xi_{j-1} \atimes \widehat{\xi_j} \atimes \xi_{j+1}\atimes\cdots \atimes \xi_k, 
\]
where $\widehat{\cdot}$ denotes omission of a term.  These operators satisfy the following $q$-commutation relations: \begin{equation} \label{eq:qCommutation}
\ell(h)^*\ell(g) = \langle h,g\rangle 1  + q \ell(g) \ell(h)^*,\qquad h,g\in H_\mathbb{C}.
\end{equation}

\subsection{The matrices $A_N^{(k,q)}$, $Q_N^{(k,q)}$ and $L_N^{(k)}$.\label{subseq:qMatrices}}
Let $H$ be, as before, a real Hilbert space with basis $h_{rs}^{(\alpha,k)}$, $\alpha=1,2$, $k=1,\dots,M$ and $1\leq r,s \leq N$. Let
\[
Q_N^{(k,q)} = \frac{1}{\sqrt{N}} \sum_{r,s=1}^N e_{rs} \otimes 
\bigl( b_q (h_{rs}^{(1,k)}) + i b_q(h_{rs}^{(2,k)}))
\]
where $e_{rs}$ are matrix units for the algebra of $N\times N$ matrices, and 
$$
b_q (h) = \ell(h)+\ell(h)^*.$$

Note that \begin{eqnarray*}
    b_q (h_{rs}^{(1,k)}) + i b_q(h_{rs}^{(2,k)}) &=& \ell(h_{rs}^{(1,k)} + i h_{rs}^{(2,k)})  
+ \ell(h_{rs}^{(1,k)} -i  h_{rs}^{(2,k)})^* \\
&=& \sqrt{2}(\ell(g_{rs}^{(1,k)}) + \ell(g_{rs}^{(2,k)})^*)
\end{eqnarray*}
where $ g_{rs}^{(1,k)} = 2^{-1/2}(h_{rs}^{(1,k)} + i  h_{rs}^{(2,k)}) $
and $g_{rs}^{(2,k)} = 2^{-1/2} (h_{rs}^{(1,k)} -i  h_{rs}^{(2,k)})$ are two orthonormal families of vectors. 

Thus if we let \[
L_N^{(\alpha,k)} = \frac{1}{\sqrt{N}} \sum_{r,s=1}^N e_{rs}\otimes \ell ( g_{rs}^{(\alpha,k)}),\qquad \alpha=1,2,\quad k=1,\dots,M,
\]
then \[
Q_N^{(k,q)} = \sqrt{2}(L_N^{(1,k)} + (L_N^{(2,k)})^*),\qquad k=1,\dots,M.
\]
(We omit the superscript $q$ from the matrix $L$ for brevity).  

If we now let 
$$A_N^{(k,q)} = \frac{Q_N^{(k,q)} + (Q_N^{(k,q)})^*}{2}, \qquad 
B_N^{(k,q)} = \frac{Q_N^{(k,q)} - (Q_N^{(k,q)})^*}{2i} $$ then we have, recalling the notation of \S\ref{sect:CARMatrices},
\[
Q_N^{(k)} = Q_N^{(k,-1)},\qquad A_N^{(k)} = A_N^{(k,-1)},\qquad B_N^{(k)} = B_N^{(k,-1)}.
\]

The aim of this section is to prove that, for fixed $q$ in the interval $(-1,1)$,
the families $(A_N^{(k,q)}, B_N^{(k,q)}:k=1,2,\dots,M)$ converge strongly to a free semicircular family $(A^{(k)}, B^{(k)}:1\leq k\leq M)$ as $N\to\infty$ (and thus the family  $(Q_N^{(k,q)}: k=1,2,\dots,M)$ converges to the free circular family $(A^{(k)}+i B^{(k)}: k=1,\dots,M)$).

\subsection{Technical estimates on the operator $\mathscr{L}$.}

We begin with a technical lemma.  We fix $-1<q<1$. 
  Let $e_1,\dots,e_s$ be an orthonormal basis for $\mathbb{C}^s$, and let $g_1,\dots,g_s$ be an orthonormal family of vectors in $H_\mathbb{C}$.  For $R=0,1,2,\dots$, denote by 
    $\mathscr{F}_q^{\leq R}$ the subspace of $\mathscr{F}_q^{\operatorname{alg}}$ consisting of tensors of rank $R$ or less (by convention $\mathscr{F}_q^{\leq 0} = \mathbb{C}\Omega$), and consider the map $$\mathscr{L}_R : \mathbb{C}^s \otimes \mathscr{F}_q^{\leq R}\to\mathscr{F}_q^{\leq R+1} $$ given by
    $$\mathscr{L}_R \left(\sum_{j=1}^s e_j \otimes \xi_j \right )= \sum_{j=1}^s\ell(g_j) \xi_j.$$

\begin{lemma} \label{lem:operatorL}
   Let $c_R = (1 + |q| + \cdots + |q|^{R})^{1/2}$.   Then, with the above notation, \begin{equation}\label{eq:scriptEllStar} 
    \mathscr{L}_R^* (\xi) = \sum_u e_u \otimes \ell(g_u)^* \xi,\qquad \forall \xi\in \mathscr{F}_q^{\leq R+1}
    \end{equation} and  $$\Vert \mathscr{L}_R \Vert \leq c_R \leq \frac{1}{\sqrt{1-|q|}}.$$  In particular, there exists a bounded operator $\mathscr{L}:\mathbb{C}^s\otimes \mathscr{F}_q\to\mathscr{F}_q$ whose restriction to  $\mathbb{C}^s\otimes \mathscr{F}_q^{\leq R}$ is $\mathscr{L}_R$ for each $R$. Moreover,  
    \begin{equation}\label{eq:scriptEllStarClosure} 
    \mathscr{L}^* (\xi) = \sum_u e_u \otimes \ell(g_u)^* \xi,\qquad \forall \xi\in \mathscr{F}_q
    \end{equation} and $\Vert \mathscr{L}\Vert  \leq (1-|q|)^{-1/2}$. 
    
\end{lemma}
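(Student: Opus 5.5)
The adjoint formula \eqref{eq:scriptEllStar} is checked directly on the finite-dimensional spaces. For $\xi\in\mathscr{F}_q^{\leq R+1}$ and $\sum_j e_j\otimes\xi_j\in\mathbb{C}^s\otimes\mathscr{F}_q^{\leq R}$,
\[
\Bigl\langle \sum_j \ell(g_j)\xi_j,\xi\Bigr\rangle=\sum_j\langle \xi_j,\ell(g_j)^*\xi\rangle .
\]
This uses that $\ell(g_j)^*$ is the adjoint of $\ell(g_j)$ on the algebraic Fock space. Note also that $\ell(g_j)^*$ maps $\mathscr{F}_q^{\leq R+1}$ into $\mathscr{F}_q^{\leq R}$, so the formula makes sense in the stated spaces.

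For the norm bound I would estimate $\mathscr{L}_R\mathscr{L}_R^*$ rather than $\mathscr{L}_R^*\mathscr{L}_R$. We have $\Vert\mathscr{L}_R\Vert^2=\Vert \mathscr{L}_R\mathscr{L}_R^*\Vert$, and
\[
\mathscr{L}_R\mathscr{L}_R^*=P_R:=\sum_{u=1}^s \ell(g_u)\ell(g_u)^*
\]
restricted to $\mathscr{F}_q^{\leq R+1}$. This operator is positive and preserves each homogeneous level $H_\mathbb{C}^{\atimes n}$. So the claim reduces to showing $P\le c_{n-1}^2$ on the level-$n$ tensors for $n\le R+1$, with $P=0$ on $\mathbb{C}\Omega$.

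The key step is an induction on $n$ using the $q$-commutation relation \eqref{eq:qCommutation}. On level $n\ge1$ we have $\ell(g_u)^*\ell(g_u)=1+q\,\ell(g_u)\ell(g_u)^*$. Instead of that identity directly, I would use the recursive structure it gives for the Gram operator: the $q$-Fock inner product on level $n$ satisfies
\[
\langle \ell(h)\xi,\ell(h')\zeta\rangle=\langle \xi,\ell(h)^*\ell(h')\zeta\rangle,\qquad \ell(h)^*\ell(h')=\langle h,h'\rangle+q\,\ell(h')\ell(h)^*.
\]
From this one shows that $T_n:=P|_{\text{level }n}$ satisfies
\[
P\,\ell(g)=\ell(g)+q\,\ell(g)P+\bigl(\text{terms from }g\notin\operatorname{span}\{g_u\}\bigr).
\]
The cleanest route is to bound $\Vert\mathscr{L}^*\zeta\Vert^2=\sum_u\Vert\ell(g_u)^*\zeta\Vert^2$. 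Writing a general level-$n$ vector as $\zeta=\sum_v\ell(f_v)\zeta_v$ (by density of the algebraic span), expanding $\ell(g_u)^*\ell(f_v)=\langle g_u,f_v\rangle+q\,\ell(f_v)\ell(g_u)^*$ and applying the induction hypothesis at level $n-1$ should give $\Vert P|_n\Vert\le 1+|q|\,\Vert P|_{n-1}\Vert$. Iterating from $\Vert P|_0\Vert=0$ yields
\[
\Vert P|_n\Vert\le 1+|q|+\cdots+|q|^{n-1}=c_{n-1}^2\le c_R^2 .
\]
This recursion is the step I expect to be the main obstacle. The cross terms must be controlled via Cauchy–Schwarz in the $q$-inner product, and if the direct expansion is messy the fallback is to compare with the known Bożejko–Speicher bound $\Vert\ell(h)\Vert\le(1-|q|)^{-1/2}\Vert h\Vert$, adapted to a row of creation operators. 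Finally, $c_R\le(1-|q|)^{-1/2}$ follows by summing the geometric series.

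For the extension, the $\mathscr{L}_R$ are compatible under the inclusions $\mathscr{F}_q^{\leq R}\subset\mathscr{F}_q^{\leq R+1}$ and are uniformly bounded by $(1-|q|)^{-1/2}$. Since $\bigcup_R\mathbb{C}^s\otimes\mathscr{F}_q^{\leq R}$ is dense, they extend to a bounded $\mathscr{L}$ with the same norm bound. The formula \eqref{eq:scriptEllStarClosure} follows by continuity, using that each $\ell(g_u)^*$ is bounded on $\mathscr{F}_q$ and that the two sides agree on the dense subspace.
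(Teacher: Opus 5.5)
Your adjoint computation, the reduction to homogeneous levels and the extension-by-continuity argument are fine. The target recursion is also right: since $\Vert\mathscr{L}\mathscr{L}^*\Vert=\Vert\mathscr{L}^*\mathscr{L}\Vert$, your $\Vert P|_n\Vert\le 1+|q|\,\Vert P|_{n-1}\Vert$ is exactly the paper's $\Vert\mathscr{L}_{R+1}\Vert^2\le 1+|q|\,\Vert\mathscr{L}_R\Vert^2$. But that recursion is the entire content of the lemma, and you do not prove it.

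The identity you display is false. In fact $P\ell(g)=\ell\bigl(\sum_u\langle g_u,g\rangle g_u\bigr)+q\sum_u\ell(g_u)\ell(g)\ell(g_u)^*$, and since $q$-creation operators do not commute, the last sum is not $\ell(g)P$.

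Your ``cleanest route'' stalls for a structural reason. For orthonormal $f_v$ and $\zeta=\sum_v\ell(f_v)\zeta_v$ one has $\Vert\zeta\Vert^2=\sum_v\Vert\zeta_v\Vert^2+q\sum_{v,w}\langle\ell(f_w)^*\zeta_v,\ell(f_v)^*\zeta_w\rangle$. So any estimate of $\sum_u\Vert\ell(g_u)^*\zeta\Vert^2$ in terms of the $\zeta_v$ must be converted back into a multiple of $\Vert\zeta\Vert^2$. That conversion needs a lower bound on a Gram operator, which you do not have.

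The fallback does not close the gap either. The single-operator bound $\Vert\ell(h)\Vert\le(1-|q|)^{-1/2}$ only gives $\Vert P\Vert\le s/(1-|q|)$ by the triangle inequality. What is needed is a bound independent of $s$, since the lemma is later applied with $s=N^2$.

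The missing idea is to work with $\mathscr{L}^*\mathscr{L}$ on the domain side, as the paper does.

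\begin{itemize}
\item Take $x=\sum_r e_r\otimes\xi_r$ with $\xi_r\in\mathscr{F}_q^{\leq R+1}$. The $q$-commutation relation gives $\Vert\mathscr{L}_{R+1}x\Vert^2=\Vert x\Vert^2+qB$, where $B=\sum_{u,v}\langle\ell(g_v)^*\xi_u,\ell(g_u)^*\xi_v\rangle$.
\item Put $y=\sum_{u,v}e_v\otimes e_u\otimes\ell(g_u)^*\xi_v=\sum_v e_v\otimes\mathscr{L}_R^*\xi_v$, and let $U$ be the flip $e_u\otimes e_v\mapsto e_v\otimes e_u$ on $\mathbb{C}^s\otimes\mathbb{C}^s$. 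Then $B=\langle (U\otimes 1)y,y\rangle$.
\item Hence $|B|\le\Vert y\Vert^2=\sum_v\Vert\mathscr{L}_R^*\xi_v\Vert^2\le c_R^2\Vert x\Vert^2$, and so $\Vert\mathscr{L}_{R+1}\Vert^2\le 1+|q|c_R^2=c_{R+1}^2$.
\end{itemize}

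This flip-plus-Cauchy--Schwarz step controls the cross terms by the previous level, uniformly in $s$. Without it or an equivalent device, your argument for the norm bound is incomplete.
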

\begin{proof}
     If $\sum_t e_t \otimes \zeta_t \in \mathbb{C}^s \otimes \mathscr{F}_q^{\leq R}$, then for any $\xi\in \mathscr{F}_q^{\leq R+1}$
    \begin{eqnarray*}
       \langle  \sum_u  e_u \otimes \ell^*(g_u) \xi , \sum_t e_t \otimes \zeta_t 
       \rangle & =&  \sum_u \langle   \xi ,   \ell(g_u) \zeta_u \rangle  \\ & = & \langle \xi, \mathscr{L}_{R}(\sum_u e_u \otimes \zeta_u)\rangle.  
    \end{eqnarray*} Since $\ell^*(g_u)\xi \in \mathscr{F}_q^{\leq R}$ and $\sum e_t\otimes \zeta_t \in \mathbb{C}^s \otimes \mathscr{F}_q^{\leq R}$ was arbitrary, this implies that
    \begin{equation*}
    \mathscr{L}_R^* (\xi) = \sum_u e_u \otimes \ell^*(g_u) \xi
    \end{equation*} which is precisely \eqref{eq:scriptEllStar}.
    
    We will prove that $\Vert \mathscr{L}_R\Vert \leq c_R $ by induction on $R$.  

    If $R=0$ and $\lambda_1,\dots,\lambda_s\in \mathbb{C}$, we have $$\mathscr{L}_0 \left(\sum_{r=1}^s e_r \otimes \lambda_r\Omega \right) = \sum_{r=1}^s \lambda_r g_r$$  which, by orthonormality of $g_1,\dots,g_s$, has the same norm as $\sum_{r=1}^s  e_r\otimes \lambda_r \Omega $.  Thus $\Vert \mathscr{L}_0 \Vert = 1 =c_0$. 

    Assume now that the norm inequality is true for $R$, i.e., $\Vert \mathscr{L}_R \Vert \leq c_R$, regardless of the value of $s$ and the choice of orthonormal basis $e_1,\dots,e_s$ and orthonormal family $g_1,\dots,g_s$.    
    
    Let $\xi_1,\dots,\xi_s\in \mathscr{F}_q^{\leq R+1}$.  
    Using the $q$-commutation relations \eqref{eq:qCommutation}, we compute:
    \begin{eqnarray*}
            \left\Vert \mathscr{L}_{R+1} \left(\sum_{r=1}^s e_r \otimes \xi_r \right) \right \Vert^2 
            &=& \sum_{u,v=1}^s \langle \ell(g_u) \xi_u, \ell( g_v ) \xi_v \rangle\\ 
            &=& \sum_{u,v=1}^s \langle \xi_u, \ell^*(g_u)\ell(g_v) \xi_v \rangle \\
            &=& \sum_{u=1}^s \langle \xi_u,\xi_u\rangle + q \sum_{u,v=1}^s 
                \langle \ell(g_v)^* \xi_u , \ell(g_u)^*\xi_v \rangle \\ 
                &=& \Vert \sum e_r \otimes \xi_r\Vert^2 + q B
    \end{eqnarray*}
    where we put $B = \sum_{u,v=1}^s 
                \langle \ell(g_v)^* \xi_u , \ell(g_u)^*\xi_v \rangle$.  

                Next,
    \begin{eqnarray*}
     |B| &=& \left | \left \langle  \sum_{u,v} e_u \otimes e_v \otimes \ell(g_u)^*\xi_v , 
                        \sum_{u,v} e_{v}\otimes e_u \otimes \ell(g_u)^* \xi_v 
                \right \rangle \right | \\
                &=& \left | \left \langle (U\otimes I) (\sum_{u,v} e_v \otimes e_u \otimes \ell(g_u)^*\xi_v ), 
                        \sum_{u,v} e_{v}\otimes e_u \otimes \ell(g_u)^* \xi_v 
                \right \rangle \right | 
    \end{eqnarray*} where we put $U : \mathbb{C}^s \otimes \mathbb{C}^s \to \mathbb{C}^s \otimes \mathbb{C}^s $ the isometric map $$U(e_u\otimes e_v ) = e_v \otimes e_u$$.  Since $\Vert U\Vert=1$,
    \begin{eqnarray*}
     |B| &\leq & \left | \left \langle  \sum_{u,v} e_u \otimes e_v \otimes \ell(g_u)^*\xi_v , 
                        \sum_{u,v} e_{u}\otimes e_v \otimes \ell(g_u)^* \xi_v 
                \right \rangle \right |  \\ 
                &=& \sum_{v} \Bigl\Vert \sum_u e_u\otimes \ell(g_u)^* \xi_v \Bigr\Vert^2
                \\ & = & \sum_v \Bigl \Vert \mathscr{L}_R^* (\xi_v) \Bigr\Vert^2, 
    \end{eqnarray*}
    where in the last step we applied \eqref{eq:scriptEllStar}. 
    
    Thus \begin{eqnarray*}
        |B|&\leq& \sum_v  \Vert \mathscr{L}_R\Vert^2 \Vert \xi_v\Vert^2 
        \leq c_R^2 \Bigl\Vert \sum_{v}  e_v \otimes  \xi_v \Bigr\Vert^2. 
    \end{eqnarray*}

    Putting everything together gives us:
     \begin{eqnarray*}
            \left\Vert \mathscr{L}_{R+1} \left(\sum_{r=1}^s e_r \otimes \xi_r \right) \right \Vert^2 
                &\leq & \Bigl\Vert \sum e_r \otimes \xi_r\Bigr\Vert^2 + |q| |B| \\
                &\leq& \Bigl\Vert \sum e_r \otimes \xi_r\Bigr\Vert^2 + |q| c_R^2 \Bigl\Vert \sum_{v}  e_v \otimes  \xi_v \Bigr\Vert^2 \\ 
                &\leq& (1+|q| c_R^2) \Bigl\Vert \sum_{v}  e_v \otimes  \xi_v \Bigr\Vert^2.
    \end{eqnarray*}
    Since $c_R^2 = 1 + \cdots + |q|^R$, $1+|q|c_R^2 = 1+ \cdots + |q|^{R+1} = c_{R+1}^2$ and we conclude that $$\Vert \mathscr{L}_{R+1}\Vert \leq c_{R+1},$$ completing the inductive step.

\end{proof}

\begin{lemma} \label{lem:estimateOnSum} With the notation of \S\ref{subseq:qMatrices}, for any $N$ and any $\alpha$, $\alpha'$, $k$, $k'$, $$
\left\Vert \sum_{u,v,p=1}^N e_{vp} \otimes \ell(g_{up}^{(\alpha',k')}) 
    \ell(g_{uv}^{(\alpha,k)})^*  \right\Vert \leq \frac{1}{1-|q|}.$$
\end{lemma}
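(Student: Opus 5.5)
The plan is to factor the operator as a product of two operators, each of the form $\mathscr{L}$ or $\mathscr{L}^*$ from Lemma~\ref{lem:operatorL} (tensored with identities), and then bound each factor by $(1-|q|)^{-1/2}$. Write $T=\sum_{u,v,p} e_{vp}\otimes \ell(g'_{up})\ell(g_{uv})^*$, where $g'_{up}=g^{(\alpha',k')}_{up}$ and $g_{uv}=g^{(\alpha,k)}_{uv}$. Let $e_1,\dots,e_N$ be the standard basis of $\mathbb{C}^N$, so that $e_{vp}=e_v e_p^*$.

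First I would compute $T$ on a vector $e_p\otimes\xi$:
\[
T(e_p\otimes \xi)=\sum_v e_v\otimes \sum_u \ell(g'_{up})\ell(g_{uv})^*\xi .
\]
This suggests the factorization $T=X\circ Y$ through the intermediate space $\mathbb{C}^N\otimes\mathbb{C}^N\otimes\mathscr{F}_q$, with basis $e_p\otimes e_u\otimes\cdot$, together with a reshuffling of indices. Define
\[
Y(e_p\otimes\xi)=\sum_{u,v} e_v\otimes e_u\otimes \ell(g'_{up})\,\cdot\ ?
\]
The order of the two operators has to be handled carefully here, since the annihilation acts first. A cleaner route is to write $T=X Z$ with
\[
Z:\ e_p\otimes\xi\mapsto \sum_{u,v} e_p\otimes e_u\otimes e_v\otimes \ell(g_{uv})^*\xi
\]
and
\[
X:\ e_p\otimes e_u\otimes e_v\otimes\zeta\mapsto e_v\otimes \ell(g'_{up})\zeta .
\]
Then $X Z(e_p\otimes\xi)=\sum_{u,v} e_v\otimes\ell(g'_{up})\ell(g_{uv})^*\xi=T(e_p\otimes\xi)$, as required.

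Next I would bound each factor. The operator $Z$ is $1_{\mathbb{C}^N}\otimes \mathscr{L}^*$, where $\mathscr{L}$ is the operator of Lemma~\ref{lem:operatorL} for the orthonormal family $\{g_{uv}\}_{u,v}$ with $s=N^2$. Formula \eqref{eq:scriptEllStarClosure} gives exactly $\mathscr{L}^*\xi=\sum_{u,v} e_u\otimes e_v\otimes \ell(g_{uv})^*\xi$, so $\|Z\|\le (1-|q|)^{-1/2}$.

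For $X$, I would reorder the tensor factors by a unitary so that $v$ sits in front. Then $X$ becomes $1_{\mathbb{C}^N_v}\otimes \mathscr{L}'$, where $\mathscr{L}'(\sum_{u,p} e_p\otimes e_u\otimes\zeta_{pu})=\sum_{u,p}\ell(g'_{up})\zeta_{pu}$ is the operator of Lemma~\ref{lem:operatorL} for the orthonormal family $\{g'_{up}\}$ indexed by the pairs $(u,p)$. Hence $\|X\|\le(1-|q|)^{-1/2}$, and multiplying the two bounds gives the claimed estimate $1/(1-|q|)$.

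The main obstacle is the index bookkeeping. I need to check that the pair $(u,p)$ in the creation factor and the pair $(u,v)$ in the annihilation factor each range over a full orthonormal family, and that the shared index $u$ and the passthrough indices $p,v$ are routed correctly, so that both $X$ and $Z$ are genuinely isometric conjugates of $1\otimes\mathscr{L}$ or $1\otimes\mathscr{L}^*$. I also need to work first on $\mathbb{C}^N\otimes\mathscr{F}_q^{\mathrm{alg}}$ and then extend by boundedness, using that $\mathscr{L}$ is defined on all of $\mathscr{F}_q$ by Lemma~\ref{lem:operatorL}. No $q$-commutation is needed beyond that lemma.
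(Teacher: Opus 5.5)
Your proposal is correct and is essentially the paper's proof. Your $Z$ is the paper's $D=1\otimes\mathscr{L}^*$, and your $X$ is the paper's $CW$, i.e.\ the creation map $1\otimes\mathscr{L}$ precomposed with the tensor-flip unitary. The two factors together give $(1-|q|)^{-1/2}\cdot(1-|q|)^{-1/2}=(1-|q|)^{-1}$.
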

\begin{proof}
    Let $f_1,\dots,f_N$ be an orthonormal basis for $\mathbb{C}^N$, and let $\xi\in \mathscr{F}_q$.

    Consider the map 
    $
    D: \mathbb{C}^N \otimes \mathscr{F}_q \to \mathbb{C}^N \otimes \mathbb{C}^N \otimes \mathbb{C}^N \otimes \mathscr{F}_q
    $
    given by $$
    D :  f_r \otimes \xi \mapsto \sum_{u,v} f_r \otimes f_u \otimes f_v  \otimes \ell(g_{uv}^{(\alpha,k)})^*\xi.$$
    Note that $D = 1\otimes \mathscr{L}^*$ where $\mathscr{L}$ is the operator associated in Lemma~\ref{lem:operatorL} to the orthonormal basis $f_u\otimes f_v$, $1\leq u,v\leq N$ for $\mathbb{C}^{N^2}$ and the corresponding orthonormal family $g_{uv}^{(\alpha,k)}$, $1\leq u,v\leq N$ in $H_\mathbb{C}$. It follows that $\Vert D\Vert \leq (1-|q|)^{-1/2}$.

    Consider also $C : \mathbb{C}^N\otimes \mathbb{C}^N \otimes \mathbb{C}^N \otimes \mathscr{F}_q\to \mathbb{C}^N \otimes \mathscr{F}_q$ given by $$
    C: f_r \otimes f_u\otimes f_v  \otimes \xi \mapsto f_r \otimes \ell(g_{u,v}^{(\alpha',k')}) \xi.$$  Once again, $C = 1\otimes \mathscr{L}$, where $\mathscr{L}$ is the operator associated in Lemma~\ref{lem:operatorL} to the orthonormal basis $f_u\otimes f_v$ and orthonormal family $g_{uv}^{(\alpha',k')}$, and so $\Vert C\Vert \leq (1-|q|)^{-1/2}$. 

    Let now $\eta = f_r \otimes \xi \in \mathbb{C}^N\otimes \mathscr{F}_q$.  Let $W$ be the unitary defined by $W(f_r\otimes f_u\otimes f_v\otimes \xi)  = (f_v \otimes f_u \otimes f_r \otimes \xi)$. Then 
    \begin{eqnarray*}
    C W D \eta &=&  C \sum_{u,v} f_v \otimes f_u \otimes f_r \otimes \ell(g_{uv}^{(\alpha,k)})^* \xi \\
    &=& \sum_{u,v} f_v \otimes \ell(g_{ur}^{(\alpha',k')}) 
    \ell(g_{uv}^{(\alpha,k)})^* \xi  \\
    &=& \sum_{u,v,p} \bigl(e_{vp} \otimes \ell(g_{up}^{(\alpha',k')}) 
    \ell(g_{uv}^{(\alpha,k)})^*\bigr) \cdot (f_r \otimes \xi) 
    \end{eqnarray*} where we use the action of the matrix units $e_{uv}$ on the basis $f_p$ by $e_{uv} f_p = \delta_{vp} f_u$. 
    Thus 
    $$ \left\Vert \sum_{u,v,p=1}^N e_{vp} \otimes \ell(g_{up}^{(\alpha',k')}) 
    \ell(g_{uv}^{(\alpha,k)})^*  \right\Vert \leq \Vert C \Vert \Vert W\Vert \Vert D\Vert \leq 
    \frac{1}{1-|q|},
    $$
    as claimed.
\end{proof}

\subsection{Strong Convergence for $-1<q<1$.}
\begin{theorem} \label{thrm:qEstimate}
Let $L_N^{(\alpha,k)}$ be as in \S\ref{subseq:qMatrices}.  Then for any $\alpha,\alpha'\in \{1,2\}$ and any  $1\leq k,k'\leq M$, 
$$ \left\Vert (L_N^{(\alpha,k)})^* L_N^{(\alpha',k')} - \delta_{\alpha\alpha'}\delta_{kk'} 1\right\Vert \leq \frac{1}{N}\cdot \frac{|q|}{1-|q|}.$$
\end{theorem}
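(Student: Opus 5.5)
The plan is to expand the product directly and then apply the $q$-commutation relations \eqref{eq:qCommutation} entrywise. By definition,
\[
(L_N^{(\alpha,k)})^* L_N^{(\alpha',k')} = \frac{1}{N}\sum_{r,s,r',s'} e_{sr}e_{r's'}\otimes \ell(g_{rs}^{(\alpha,k)})^*\ell(g_{r's'}^{(\alpha',k')}) = \frac1N \sum_{s,s',r} e_{ss'}\otimes \ell(g_{rs}^{(\alpha,k)})^*\ell(g_{rs'}^{(\alpha',k')}).
\]
We apply \eqref{eq:qCommutation} to each product $\ell(g_{rs}^{(\alpha,k)})^*\ell(g_{rs'}^{(\alpha',k')})$. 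This gives $\langle g_{rs}^{(\alpha,k)},g_{rs'}^{(\alpha',k')}\rangle 1 + q\,\ell(g_{rs'}^{(\alpha',k')})\ell(g_{rs}^{(\alpha,k)})^*$.

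The scalar term is $\delta_{\alpha\alpha'}\delta_{kk'}\delta_{ss'}$, provided the families $g^{(1,k)}$ and $g^{(2,k)}$ are orthogonal to each other and to all families with $k'\neq k$. This is clear from their definitions, since $\langle h^{(1)}+ih^{(2)},h^{(1)}-ih^{(2)}\rangle=0$. Summing over $r$ gives $N\delta_{ss'}$, so after the $1/N$ prefactor the scalar part is exactly $\delta_{\alpha\alpha'}\delta_{kk'}1$. The difference we need to bound is therefore
\[
\frac{q}{N}\sum_{s,s',r} e_{ss'}\otimes \ell(g_{rs'}^{(\alpha',k')})\ell(g_{rs}^{(\alpha,k)})^*.
\]

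Relabeling $s\to v$, $s'\to p$, $r\to u$ turns this into $\frac{q}{N}\sum_{u,v,p} e_{vp}\otimes \ell(g_{up}^{(\alpha',k')})\ell(g_{uv}^{(\alpha,k)})^*$. This is precisely $q/N$ times the operator estimated in Lemma~\ref{lem:estimateOnSum}, which yields the bound $\frac{|q|}{N(1-|q|)}$.

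The only real care needed is in the index bookkeeping: one must check that the relabeled sum matches the pattern of Lemma~\ref{lem:estimateOnSum} exactly, with the creation operator indexed by $(u,p)$, the annihilation operator by $(u,v)$, and matrix unit $e_{vp}$. One should also confirm that the lemma's roles of $(\alpha,k)$ and $(\alpha',k')$ are the ones needed here, and that the lemma allows these families to coincide. Its proof works for any pair of orthonormal families, so this holds. Everything else is a routine substitution.
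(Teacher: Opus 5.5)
Your proposal is correct and is essentially the paper's proof. You expand the product, apply the $q$-commutation relation \eqref{eq:qCommutation} entrywise to isolate $\delta_{\alpha\alpha'}\delta_{kk'}1$, relabel indices, and bound the remainder $\frac{q}{N}\sum_{u,v,p} e_{vp}\otimes \ell(g_{up}^{(\alpha',k')})\ell(g_{uv}^{(\alpha,k)})^*$ by Lemma~\ref{lem:estimateOnSum}; your explicit check that $g^{(1,k)}$ and $g^{(2,k)}$ are mutually orthogonal fills in a detail the paper leaves implicit.
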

\begin{proof}
    By definition, $$(L_N^{(\alpha,k)})^* = \frac{1}{\sqrt{N}} \sum_{u,v=1}^N e_{vu} \otimes \ell(g_{uv}^{(\alpha,k)})^*,\quad L_N^{(\alpha',k')} = \frac{1}{\sqrt{N}} \sum_{u',p=1}^N e_{u'p} \otimes \ell(g_{u'p}^{(\alpha',k')}).$$
    Thus 
    \begin{eqnarray*}
        (L_N^{(\alpha,k)})^* L_N^{(\alpha',k')} &=& 
        \frac{1}{N} \sum_{u,v=1}^N \sum_{u',p=1}^N e_{vu} e_{u'p}  \otimes \ell(g_{uv}^{(\alpha,k)})^* \ell(g_{u'p}^{(\alpha',k')} ) \\ 
        &=& \frac{1}{N} \sum_{u,v,p=1}^N e_{vp} \otimes \ell(g_{uv}^{(\alpha,k)})^* \ell(g_{up}^{(\alpha',k')} ).
    \end{eqnarray*}
    We now apply the $q$-commutation relations \eqref{eq:qCommutation} to rewrite this sum as:
     \begin{eqnarray*}
        (L_N^{(\alpha,k)})^* L_N^{(\alpha',k')} &=& 
         \delta_{kk'}\delta_{\alpha\alpha'}\sum_{v,p} e_{vp} \delta_{pv}
        + 
        \frac{q}{N} \sum_{u,v,p} e_{vp} \ell(g_{up}^{(\alpha',k')} )\ell(g_{uv}^{(\alpha,k)})^*  \\
        &=& \delta_{\alpha\alpha'}  \delta_{kk'}1 +\frac{q}{N} \Sigma
    \end{eqnarray*}
    where $\Sigma = \sum_{u,v,p} e_{vp} \ell(g_{up}^{(\alpha',k')} )\ell(g_{uv}^{(\alpha,k)})^*$.  By Lemma~\ref{lem:estimateOnSum}, $\Vert\Sigma \Vert \leq (1-|q|)^{-1}$, which gives the statement of the Theorem. 
\end{proof}

\begin{theorem}
    The family $L_N^{(\alpha,k)}$ strongly converges to a family $L^{(\alpha,k)}$ of free creation operators: for any $*$-polynomial $P$ in $2M$ variables, 
    $$
     \Vert P(L_N^{(\alpha,k)} : \alpha=1,2,\,  1\leq k\leq M)\Vert  
    \xrightarrow[N \to \infty]{} \Vert P(L^{(\alpha,k)}: \alpha=1,2,\, 1\leq k\leq M)\Vert$$
\end{theorem}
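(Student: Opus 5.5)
Theorem~\ref{thrm:qEstimate} shows that the $L_N^{(\alpha,k)}$ are asymptotically isometries with mutually orthogonal ranges, up to an error of order $1/N$ in operator norm. The plan is to compare them, via a perturbation, with honest isometries satisfying the Cuntz--Toeplitz relations exactly. Then we invoke the fact that the $C^*$-algebra generated by free creation operators is determined by those relations together with a condition on the ``vacuum projection''.

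\textbf{Step 1 (exact isometries).} Put $\mathbf{L}_N=[L_N^{(1,1)},\dots,L_N^{(2,M)}]$, a row of $2M$ operators, and $G_N=\mathbf{L}_N^*\mathbf{L}_N\in M_{2M}\otimes(M_N\otimes B(\mathscr F_q))$. Theorem~\ref{thrm:qEstimate} gives $\Vert G_N-1\Vert\le 2M\frac{|q|}{N(1-|q|)}\to 0$. Hence $G_N$ is invertible for large $N$, and $\mathbf V_N=\mathbf L_N G_N^{-1/2}$ has $\mathbf V_N^*\mathbf V_N=1$. The components $V_N^{(\alpha,k)}$ are therefore isometries with orthogonal ranges, and $\Vert V_N^{(\alpha,k)}-L_N^{(\alpha,k)}\Vert\le C/N$. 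Since a $*$-polynomial $P$ is Lipschitz on bounded sets (and $\Vert L_N\Vert\le(1-|q|)^{-1/2}$ by Lemma~\ref{lem:operatorL}), it suffices to prove $\Vert P(V_N)\Vert\to\Vert P(L)\Vert$.

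\textbf{Step 2 (Wold/Coburn decomposition).} A row of isometries with orthogonal ranges generates a $C^*$-algebra that is a quotient of the Toeplitz--Cuntz algebra $\mathcal{T}_{2M}$, and $C^*(L^{(\alpha,k)})\cong\mathcal T_{2M}$. The Wold decomposition splits the Hilbert space as $\mathcal H=\mathcal H_{\rm shift}\oplus\mathcal H_{\rm Cuntz}$. On $\mathcal H_{\rm shift}$ the representation is a multiple of the Fock representation. On $\mathcal H_{\rm Cuntz}$ it satisfies $\sum V V^*=1$, i.e.\ it factors through $\mathcal O_{2M}$. Consequently $\Vert P(V_N)\Vert=\max(\Vert P(L)\Vert\cdot[\mathcal H_{\rm shift}\ne0],\ \Vert P\Vert_{\mathcal O_{2M}}\cdot[\mathcal H_{\rm Cuntz}\neq 0])$. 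Since the Toeplitz extension $0\to\mathcal K\to\mathcal T_{2M}\to\mathcal O_{2M}\to0$ gives $\Vert P\Vert_{\mathcal O_{2M}}\le\Vert P\Vert_{\mathcal T_{2M}}=\Vert P(L)\Vert$, it is enough to show that the shift part is nonzero. In that case $\Vert P(V_N)\Vert=\Vert P(L)\Vert$ exactly, for all large $N$.

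\textbf{Step 3 (nonzero shift part; main obstacle).} We need a nonzero vector in the wandering space $\ker\mathbf V_N^*=\ker\mathbf L_N^*$, i.e.\ killed by all $(L_N^{(\alpha,k)})^*$. The natural candidate is $f\otimes\Omega$ for any $f\in\mathbb C^N$, since every $\ell(g)^*$ annihilates $\Omega$. The orbit of this vector under the isometries spans a copy of the Fock representation, so $\mathcal H_{\rm shift}\neq0$. I expect the delicate point to be Step 2 rather than this one. Specifically, we must justify that the norm on the Fock summand equals $\Vert P(L)\Vert$; this uses that $C^*$ of a unilateral shift tuple in the Fock representation is the universal $\mathcal T_{2M}$, which is Cuntz's uniqueness for $\mathcal T_n$. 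We must also check carefully the perturbation estimate for $G_N^{-1/2}$, which is a routine functional-calculus bound $\Vert G_N^{-1/2}-1\Vert\le C\Vert G_N-1\Vert$.
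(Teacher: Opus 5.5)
Your argument is correct, and it reaches the lower bound by a different route from the paper.

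\textbf{The paper's route.} The paper passes to an ultraproduct $\prod^\omega \mathscr{E}_N$. There, Theorem~\ref{thrm:qEstimate} makes the limits $\hat L^{(\alpha,k)}$ satisfy the Toeplitz--Cuntz relations exactly. Universality of the extended Cuntz algebra then gives a $*$-homomorphism $\beta$, and hence the upper bound. Injectivity of $\beta$ comes from identifying the ultralimit of $\psi_N=\frac1N\operatorname{Tr}\otimes\langle\Omega,\cdot\,\Omega\rangle$ with the vacuum state. For this the paper uses the convergence in $*$-distribution of the $L_N^{(\alpha,k)}$ quoted from earlier work, together with faithfulness of the vacuum GNS representation.

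\textbf{Your route.} You stay at finite $N$. The correction $\mathbf L_N G_N^{-1/2}$ gives exact Toeplitz--Cuntz row isometries within $O(1/N)$. Universality gives $\Vert P(V_N)\Vert\le\Vert P(L)\Vert$. The explicit wandering vectors $f\otimes\Omega\in\ker\mathbf L_N^*=\ker\mathbf V_N^*$ produce a reducing Fock summand, which gives the reverse inequality.

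\textbf{What each approach buys.} Yours yields a quantitative rate, $\bigl|\Vert P(L_N)\Vert-\Vert P(L)\Vert\bigr|\le C_P/N$. It also removes the dependence on the external moment-convergence result: the vectors $f\otimes\Omega$, killed by every $\ell(g)^*$, do the job that the vacuum-state identification does in the paper. The paper's soft argument avoids the functional-calculus perturbation and the Wold decomposition, but it gives no rate.

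\textbf{A simplification.} Your detour through $\mathcal O_{2M}$ and the $\max$ formula is unnecessary. Once you know $C^*(L)\cong\mathcal T_{2M}$, the upper bound is immediate from universality. You then need only the easy half of the Wold decomposition: the closed span of $V_w\mathcal W$ over words $w$ reduces the $V$'s and carries a multiple of the Fock representation.
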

\begin{proof}
      Let $\mathscr{E}_N$ be the $C^*$-algebra generated by $L_N^{(\alpha,k)} : \alpha=1,2, 1\leq k\leq M$ and let $\mathscr{E}$ be the extended Cuntz algebra  generated by free creation operators. Fix a free ultrafilter $\omega$ and consider the ultraproduct
      $$\mathfrak{A} = {\prod}^\omega \mathscr{E}_N.$$ Theorem~\ref{thrm:qEstimate} implies that $\Vert L_N^{(\alpha,k)}\Vert^2 = \Vert (L_N^{(\alpha,k)})^* L_N^{(\alpha,k)}\Vert \leq 1 + |q|/(1-|q|) $ uniformly in $N$.
      Thus we can let $\hat{L}^{(\alpha,k)}$ be the element of the ultrapower represented by the sequence $(L_N^{(\alpha,k)})_N$, and note that by Theorem~\ref{thrm:qEstimate}, these elements satisfy the Cuntz relations $$ (\hat{L}^{(\alpha,k)} )^* \hat{L}^{(\alpha',k')} = \delta_{\alpha\alpha'}\delta_{kk'} 1. $$ By universality of $\mathscr{E}$ (cf. \cite{Cuntz,Evans:OnOn}) there is a $*$-homomorphism $\beta:\mathscr{E}\to \mathfrak{A}$ that sends $L^{(\alpha,k)}$ to $\hat{L}^{(\alpha,k)}$. 
      
      Note that the state  $\psi_N = \frac{1}{N}\operatorname{Tr} \otimes \langle \Omega,\cdot\Omega\rangle $ defines a state on each $\mathscr{E}_N$ and thus a state $\psi_\omega $ on the ultrapower $\mathfrak{A}$. 
      By \cite{Shlyakhtenko:Matrices}, the $*$-distribution of the family $L_N^{(\alpha,k)}$ converges to that of $L^{(\alpha,k)}$, 
      so that the restriction of the state $\psi_\omega$ to the $C^*$-algebra $\hat{\mathscr{E}}$ 
      generated by the family $\hat{L}^{(\alpha,k)}$ is exactly the vacuum expectation state $\psi$ on the extended Cuntz algebra.  It follows that the composition of  the GNS representation of $\hat{\mathscr{E}}$ associated to the restriction of $\psi_\omega$ with $\beta$ is exactly the GNS representation of $\mathscr{E}$ for the vacuum state $\psi$. Since this representation is faithful, $\beta$ must be an isomorphism onto $\hat{\mathscr{E}}$ and thus preserves norms.  Since this result is independent of the choice of $\omega$, the statement of the theorem follows. 
      \end{proof}

      \begin{theorem}
          Let $-1<q<1$ and let  $A_N^{(k,q)}$, $B_N^{(k,q)}$, $Q_N^{(k,q)}$, $k=1,\dots,M$ be as in \S\ref{subseq:qMatrices}.  Let $A^{(k)}, B^{(k)}$, $k=1,\dots,M$ be a free semicircular family, and let $Q^{(k)}$ be the circular elements $A^{(k)} + i B^{(k)}$.  Then the family $(A_N^{(k,q)}, B_N^{(k,q)}: k=1,\dots,M)$ converges strongly to $(A^{(k)}, B^{(k)} : k=1,\dots,M)$, and $(Q^{(k,q)}_N:k=1,\dots,M)$ converges strongly to $(Q^{(k)}:k=1,\dots,M)$.
    \end{theorem}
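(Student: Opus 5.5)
The plan is to deduce this from the preceding theorem on strong convergence of the $L_N^{(\alpha,k)}$ to free creation operators. The key observation is that every matrix in the statement is a $*$-polynomial of degree one in the $L$'s. By \S\ref{subseq:qMatrices},
\[
Q_N^{(k,q)}=\sqrt2\bigl(L_N^{(1,k)}+(L_N^{(2,k)})^*\bigr),
\]
and hence
\[
A_N^{(k,q)}=\tfrac{\sqrt2}{2}\bigl(L_N^{(1,k)}+(L_N^{(1,k)})^*+L_N^{(2,k)}+(L_N^{(2,k)})^*\bigr),
\]
with an analogous formula for $B_N^{(k,q)}$ carrying factors $\pm i$. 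Thus any noncommutative $*$-polynomial $P$ in $(A_N^{(k,q)},B_N^{(k,q)})$ equals $\tilde P(L_N^{(\alpha,k)})$ for a fixed $*$-polynomial $\tilde P$ in $2M$ variables that does not depend on $N$.

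By the preceding theorem, $\|\tilde P(L_N^{(\alpha,k)})\|\to\|\tilde P(L^{(\alpha,k)})\|$, where the $L^{(\alpha,k)}$ are free creation operators on the full Fock space with orthonormal vectors $f_{\alpha,k}$. It therefore suffices to show that
\[
\hat Q^{(k)}=\sqrt2\bigl(L^{(1,k)}+(L^{(2,k)})^*\bigr)
\]
has the same $*$-distribution, with respect to the vacuum state, as $A^{(k)}+iB^{(k)}$ for a free semicircular family normalized as in the statement. In addition, the $C^*$-algebra they generate must have faithful vacuum state, so that norms are determined by the distribution. For the distribution, I would invoke Voiculescu's standard fact that $\ell(f)+\ell(g)^*$, with $f\perp g$ unit vectors, is circular, and that such elements built from disjoint sets of orthonormal vectors are $*$-free. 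Equivalently, the real and imaginary parts are $\ell(h)+\ell(h)^*$ with
\[
h=\tfrac{1}{\sqrt2}\bigl(f_{1,k}+f_{2,k}\bigr),\qquad h'=\tfrac{1}{\sqrt2}\,i\bigl(f_{1,k}-f_{2,k}\bigr)
\]
up to a scalar factor. These $h,h'$ are orthonormal across $k$ and, crucially, are real-orthogonal, so the corresponding semicircular elements are free. I would also check that the variances match those of $A^{(k)},B^{(k)}$.

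For faithfulness, the vacuum state is not faithful on the whole extended Cuntz algebra, but it is faithful (indeed a trace) on the von Neumann algebra generated by the semicircular elements $\ell(h)+\ell(h)^*$. In that algebra the $C^*$-norm equals the $L^\infty$ norm computed from the free semicircular distribution, so $\|\tilde P(L)\|=\|P(A^{(k)},B^{(k)})\|$. The statement for the $Q$'s then follows by substituting $Q=A+iB$ into $P$.

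The main obstacle is keeping the constants consistent. The semicircular family $(A^{(k)},B^{(k)})$ in the statement must have exactly the variance that Voiculescu's limit (quoted earlier) assigns to $A_N^{(k,q)}$. The identification of $\tilde P(L)$ with $P(A,B)$ also requires that the real orthonormal vectors $h,h'$ generate free semicirculars of that variance. I would verify this by computing second moments in $\psi_N$ and in the vacuum state, for instance $\psi(A^{(k)}{}^2)$ from $\|h\|^2$ times the prefactor.
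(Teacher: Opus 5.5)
Your proposal is correct and follows the same route as the paper. You rewrite every $*$-polynomial in $A_N^{(k,q)},B_N^{(k,q)}$ (or $Q_N^{(k,q)}$) as a fixed $*$-polynomial in the $L_N^{(\alpha,k)}$, invoke the preceding strong-convergence theorem, and identify $\sqrt2\bigl(L^{(1,k)}+(L^{(2,k)})^*\bigr)$ as free circular elements via Voiculescu. The explicit vectors $h=(f_{1,k}+f_{2,k})/\sqrt2$ and $h'$, the variance-one check, and the observation that the vacuum is faithful on the $C^*$-algebra generated by these semicirculars fill in details the paper leaves implicit.
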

    \begin{proof}
        This is immediate from the strong convergence of the matrices $L_{N}^{(\alpha,k)}$, the definitions of $A_N^{(k,q)}$, $B_N^{(k,q)}$, 
        $Q_N^{(k,q)}$, and the fact that if $L^{(\alpha,k)}$ are extended Cuntz 
        isometries, then $Q^{(k)} = \sqrt{2}(L^{(1,k)} + (L^{(2,k)})^*)$ are free circular 
        elements, and thus $A^{(k)} = \frac{1}{2} (Q^{(k)} + (Q^{(k)})^*)$ and 
        $B^{(k)} = \frac{1}{2i} (Q^{(k)} - (Q^{(k)})^*)$ form a free semicircular family (see \cite{Voiculescu:Symmetries,Voiculescu:Circular}).
    \end{proof}
    
\subsection{Remarks on the case $q=-1$.} 
Consider the anti-symmetric Fock space associated to a finite-dimensional complex Hilbert space $\mathbb{C}^s$ with orthonormal basis $e_1,\dots,e_s$. Then for any $1\leq r\leq s$ there is an automorphism $\alpha$ of the CAR algebra that takes $a(e_k)$ to $a(e_k)$ if $1\leq k\leq r$, and $a(e_k)$ to $a(e_k)^*$ if $r< k\leq s$, called the ``particle-hole duality''.  To see this, let $\sigma_j = a(e_j)+a(e_j)^*$; then CAR implies that $\sigma_j$ is a self-adjoint unitary.  Let $U=\sigma_{r+1}\cdots \sigma_{s}$. It turns out that $U a(e_k) U^* = a((-1)^{s-r}e_k)$ if $1\leq k\leq r$ and $U a(e_k)U^* = a((-1)^{s-r+1}e_k)^*$ if $r<k\leq s$.  Thus composing this  automorphism with an appropriate Bogoljubov automorphism that changes the signs of $e_k$'s gives the desired map.  

In particular, this means that there exists an automorphism that changes $\ell (g_{ij}^{(1,k)})$ to $\ell(g_{ij}^{(1,k)})$ and $\ell(g_{ij}^{(2,k)})^*$ to $\ell(g_{ij}^{(2,k)})$ (note that in our notation $\ell(\cdot)=a(\cdot)^*$ when $q=-1$).  Such an automorphism transforms $Q_N^{(k,-1)} = \sqrt{2}(L_N^{(1,k)} + (L_N^{(2,k)})^*)$ to the sum $\sqrt{2}(L_N^{(1,k)} + L_N^{(2,k)}) = N^{-1/2}\sqrt{2} \sum_{uv} e_{uv}\otimes \ell( g_{uv}^{(1,k)} + g_{uv}^{(2,k)}) = {2}  N^{-1/2} \sum_{uv} e_{uv} \otimes \ell(\hat{g}_{uv}^{(k)})$ where $\hat{g}_{uv}^{(k)}$ is some other orthonormal family.  It follows that 
$$\Vert Q_N^{(k,-1)}\Vert = {2} \Vert L_{N}^{(1,k)} \Vert.$$  
On the other hand, $Q_N^{(k)}$ converges in $*$-distribution to a circular variable defined as $A^{(k)} + i B^{(k)}$ where $A^{(k)}$ and $B^{(k)}$ are semicircular variables of norm $2$, and so $\liminf_N \Vert Q_N^{(k,-1)}\Vert \geq 2\sqrt{2}$, and thus $\liminf_N \Vert L_N^{(1,k)}\Vert \geq \sqrt{2}.$  By \cite{Shlyakhtenko:Matrices}, $L_N^{(1,k)}$ converges to an extended Cuntz isometry in $*$-moments, but that isometry has operator norm $1 \neq \sqrt{2}$. Thus this $*$-convergence cannot be upgraded to strong convergence.

\end{document}